\documentclass{amsart}
\usepackage{amsmath}
\usepackage{amstext}
\usepackage{amssymb}
\usepackage{amsthm}
\usepackage{amscd}
\textheight 8.9in \textwidth 6.2in \oddsidemargin -0.05in
\evensidemargin -0.25in \topmargin -0.05in

\swapnumbers
\theoremstyle{plain}
\newtheorem{thm}{Theorem}[section]
\newtheorem{lem}[thm]{Lemma}
\newtheorem{prop}[thm]{Proposition}
\newtheorem{cor}[thm]{Corollary}

\theoremstyle{definition}

\newtheorem{defs}[thm]{Definitions}

\newtheorem{exs}[thm]{Examples}
\newtheorem{qu}[thm]{Question}
\newtheorem{ntn}[thm]{Notation}

\theoremstyle{remark}

\newtheorem{rmk}[thm]{Remark}
\newtheorem{disc}[thm]{Discussion}

\DeclareMathOperator{\Id}{Id} 
 \DeclareMathOperator{\Hom}{Hom}
 
\DeclareMathOperator{\Ker}{Ker} \DeclareMathOperator{\Ima}{Im}

\DeclareMathOperator{\ann}{ann} 
\DeclareMathOperator{\Mod}{Mod} \DeclareMathOperator{\Endo}{End}
\DeclareMathOperator{\grann}{gr-ann}
\def\Z{\mathbb Z}
\def\N{\mathbb N}

\def\H{{\bf H}}
\def\M{{\bf M}}
\def\D{{\bf D}}

\def\fa{{\mathfrak{a}}}

\def\fb{{\mathfrak{b}}}
\def\fB{{\mathfrak{B}}}

\def\fm{{\mathfrak{m}}}

\def\fp{{\mathfrak{p}}}

\def\fq{{\mathfrak{q}}}

\def\nn{\relax\ifmmode{\mathbb N_{0}}\else$\mathbb N_{0}$\fi}
\def\lra{\longrightarrow}

\begin{document}

\title[RIGHT AND LEFT MODULES OVER THE FROBENIUS SKEW POLYNOMIAL RING]{RIGHT AND LEFT MODULES OVER THE
FROBENIUS SKEW POLYNOMIAL RING IN THE $F$-FINITE CASE}
\author{RODNEY Y. SHARP}
\address{Department of Pure Mathematics,
University of Sheffield, Hicks Building, Sheffield S3 7RH, United
Kingdom} \email{R.Y.Sharp@sheffield.ac.uk}
\author{YUJI YOSHINO}
\address{Department of Mathematics, Faculty of Science, Okayama University, Tsushima-Naka 3-1-1,
Okayama 700-8530, Japan} \email{yoshino@math.okayama-u.ac.jp}

\thanks{The first author was partially supported by the
Engineering and Physical Sciences Research Council of the United
Kingdom (Overseas Travel Grant Number EP/C538803/1), and also by the
Foundation for International Exchange Program of Okayama University.
The second author was partially supported by Japan Society for the
Promotion of Science (Grant-in-Aid (B) 21340008). }

\subjclass[2000]{Primary 13A35, 16S36, 13E05, 13E10, 13J10}

\date{\today}

\keywords{Commutative Noetherian ring, prime characteristic,
Frobenius homomorphism, skew polynomial ring, Matlis duality.}


\begin{abstract}
The main purposes of this paper are to establish and exploit the
result that, over a complete (Noetherian) local ring $R$ of prime
characteristic for which the Frobenius homomorphism $f$ is finite,
the appropriate restrictions of the Matlis-duality functor provide
an equivalence between the category of left modules over the
Frobenius skew polynomial ring $R[x,f]$ that are Artinian as
$R$-modules and the category of right $R[x,f]$-modules that are
Noetherian as $R$-modules.
\end{abstract}

\maketitle


\setcounter{section}{-1}
\section{\sc Introduction}
\label{in}

Throughout the paper, $R$ will denote a commutative Noetherian ring
of prime characteristic $p$. We shall only assume that $R$ is local
when this is explicitly stated; then, the notation `$(R,\fm)$' will
denote that $\fm$ is the maximal ideal of $R$. We shall always
denote by $f:R\lra R$ the Frobenius homomorphism, for which $f(r) =
r^p$ for all $r \in R$. We shall work with the skew polynomial ring
$R[x,f]$ associated to $R$ and $f$ in the indeterminate $x$ over
$R$. Recall that $R[x,f]$ is, as a left $R$-module, freely generated
by $(x^i)_{i \in \nn}$ (we use $\N$ and $\nn$ to denote the set of
positive integers and the set of non-negative integers,
respectively), and so consists  of all polynomials $\sum_{i = 0}^n
r_i x^i$, where $n \in \nn$  and  $r_0,\ldots,r_n \in R$; however,
its multiplication is subject to the rule
$$
xr = f(r)x = r^px \quad \mbox{~for all~} r \in R\/.
$$
Note that $R[x,f]$ can be considered as a positively-graded ring
$R[x,f] = \bigoplus_{n=0}^{\infty} R[x,f]_n$, with $R[x,f]_n = Rx^n$
for all $n \in \nn$. The ring $R[x,f]$ will be referred to as the
{\em Frobenius skew polynomial ring over $R$.}

In the case when $(R,\fm)$ is local, several authors have used,
often as an aid to the study of tight closure, the natural Frobenius
action on the top local cohomology module $H^{\dim R}_{\fm}(R)$ of
$R$: see, for example, R. Fedder \cite{F87}, Fedder and K.-i.
Watanabe \cite{FW87}, K. E. Smith \cite{S94}, N. Hara and Watanabe
\cite{HW96} and F. Enescu \cite{Enesc03}, \cite{Enesc09}. The
natural Frobenius action provides the top local cohomology module of
$R$ with a natural structure as a left module over $R[x,f]$. The top
local cohomology module of $R$ is Artinian as $R$-module, and so the
papers cited above studied one example of a left $R[x,f]$-module
that is Artinian as $R$-module. In recent years there have been
studies of more general left $R[x,f]$-modules that are Artinian as
$R$-modules: see, for example, M. Katzman \cite{MK} and the first
author's \cite{ga}, \cite{gatcti} and \cite{Fpur} (the authors are
listed alphabetically).

On the other hand, the second author showed in \cite[Proposition
3.5]{Yoshi94} that, if $R$ is {\em $F$-finite\/}, that is, the
Frobenius map $f: R \lra R$ is a finite homomorphism, then each
non-zero injective $R$-module $I$ has a non-trivial structure as a
right $R[x,f]$-module. The main purpose of this paper is to build on
that work to show that, when $R$ is $F$-finite, whenever $M$ is a
left $R[x,f]$-module, then $\Hom_R(M,I)$ can be given a structure as
right $R[x,f]$-module that extends its $R$-module structure, and,
furthermore, whenever $N$ is a right $R[x,f]$-module, then
$\Hom_R(N,I)$ can be given a structure as left $R[x,f]$-module that
extends its $R$-module structure. Special attention is given to the
case where $(R,\fm)$ is local, complete and $F$-finite, and $I$ is
taken to be $E := E_R(R/\fm)$, the injective envelope of the simple
$R$-module. Classical Matlis duality yields that whenever $G$ is an
$R$-module that is Artinian (respectively Noetherian), then the
natural `evaluation' $R$-homomorphism $G \lra \Hom_R(\Hom_R(G,E),E)$
is an isomorphism, and the `Matlis dual' $\Hom_R(G,E)$ of $G$ is
Noetherian (respectively Artinian). Our results, when combined with
Matlis duality, lead to the conclusion that the appropriate
restrictions of the functor $\Hom_R(-, E)$ provide an equivalence
between the category of left $R[x,f]$-modules that are Artinian as
$R$-modules (and all $R[x,f]$-homomorphisms between them) and the
category of right $R[x,f]$-modules that are Noetherian as
$R$-modules (and all $R[x,f]$-homomorphisms between them).

We can then use this equivalence to translate (in this complete,
local, $F$-finite case) known results about left $R[x,f]$-modules
that are Artinian as $R$-modules into results about right
$R[x,f]$-modules that are Noetherian as $R$-modules. One example of
this concerns the Hartshorne--Speiser--Lyubeznik Theorem, which we
now recall.

\begin{thm} [G. Lyubeznik {\cite[Proposition 4.4]{Lyube97}}]
\label{hs.4} {\rm (Compare Hartshorne--Speiser \cite[Proposition
1.11]{HarSpe77}.)} Suppose that $(R,\fm)$ is local, and let $G$ be a
left $R[x,f]$-module that is Artinian as $R$-module. Then there
exists $e \in \nn$ with the following property: whenever $g \in G$
is such that $x^ng = 0$ for some $n \in \N$, then $x^eg =
0$.\end{thm}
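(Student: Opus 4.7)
Plan: Set $\Gamma_n := (0 :_G x^n) = \{g \in G : x^n g = 0\}$ for $n \in \nn$. Iterating the defining relation $xr = r^p x$ of $R[x,f]$ gives $x^n r = r^{p^n} x^n$, so for $g \in \Gamma_n$ and $r \in R$ one has $x^n(rg) = r^{p^n}(x^n g) = 0$; hence each $\Gamma_n$ is an $R$-submodule of $G$ and $\Gamma_1 \subseteq \Gamma_2 \subseteq \cdots$ is an ascending chain. The theorem is equivalent to the statement that this chain terminates, that is, $\bigcup_n \Gamma_n = \Gamma_e$ for some $e \in \nn$. The main obstacle is that ascending chains of $R$-submodules of an Artinian $R$-module need not stabilize---witness $\bigl((0:_E \fm^n)\bigr)_n$ inside $E = E_R(R/\fm)$ when $\dim R > 0$---so one must genuinely exploit the Frobenius-semilinear action, not merely the Artinian hypothesis.

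The approach I would take is to pair the ascending chain $(\Gamma_n)$ with the companion descending chain of $R$-submodules $L_n := R \cdot x^n G$ (the $R$-submodule generated by $x^n G$). Since $x^{n+1} g = x^n(xg) \in x^n G \subseteq L_n$, one has $L_{n+1} \subseteq L_n$, and the Artinian hypothesis forces $L_n = L_N$ for all $n \geq N$ and some $N$. A short computation using $xr = r^p x$ gives $xL_N \subseteq L_{N+1} = L_N$, so $L_N$ is an $R[x,f]$-submodule of $G$; iterating and using the reverse inclusion that comes from $L_N = L_{N+k} = R \cdot x \cdot x^{N+k-1} G \subseteq R \cdot x L_N$, one obtains $L_N = R \cdot x^k L_N$ for every $k \geq 0$. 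In particular, since $x^N G \subseteq L_N$, the quotient $G/L_N$ is annihilated by $x^N$.

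Granted the key vanishing $L_N \cap \Gamma_\infty = 0$ (where $\Gamma_\infty := \bigcup_n \Gamma_n$), the natural map $\Gamma_\infty \to G/L_N$ is an injection into a module annihilated by $x^N$, whence $x^N \Gamma_\infty = 0$ and $\Gamma_\infty = \Gamma_N$; one may then take $e := N$. The hard part is therefore the assertion $L_N \cap \Gamma_\infty = 0$, which is essentially the Hartshorne--Speiser--Lyubeznik decomposition of the Artinian left $R[x,f]$-module $G$ into a ``stable'' piece $L_N$ (on which $x$ acts non-nilpotently) and a ``nilpotent'' piece $\Gamma_\infty$. To establish it, one combines the equalities $L_N = R \cdot x^k L_N$ (which exhibit every $g \in L_N$ as an $R$-linear combination of arbitrarily deep $x$-images) with a hypothetical relation $x^m g = 0$ and the commutation $x^m r = r^{p^m} x^m$, and extracts via a descending-chain argument applied to suitable Artinian subquotients of $L_N$ that such a $g$ must in fact vanish. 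This Frobenius-twisted book-keeping is the combinatorial heart of the theorem.
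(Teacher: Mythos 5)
The paper does not actually prove Theorem \ref{hs.4}: it is quoted from Lyubeznik, with a pointer to \cite{HSLonly} for a short proof. So the question is whether your outline is itself a proof, and it is not.

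Your reductions in the first half are correct and worth keeping: $\Gamma_n := (0:_G x^n)$ are $R$-submodules forming an ascending chain; $L_n := R\cdot x^n G$ forms a descending chain of $R$-submodules of the Artinian module $G$ and hence stabilises at some $L_N$; $L_N$ is an $R[x,f]$-submodule with $L_N = R\cdot x^k L_N$ for all $k$; and $G/L_N$ is annihilated by $x^N$. If one also knew $L_N \cap \Gamma_\infty = 0$, then indeed $\Gamma_\infty$ would embed (as an $R[x,f]$-module) into $G/L_N$ and hence be killed by $x^N$, so $e = N$ would work. The logical skeleton is sound.

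The problem is that the crucial claim $L_N \cap \Gamma_\infty = 0$ --- equivalently, that the stabilised image $L_N$ is $x$-torsion-free --- is not proved; it is replaced by the sentence about ``Frobenius-twisted book-keeping'' and ``a descending-chain argument applied to suitable Artinian subquotients.'' This is not a minor gap to be filled in mechanically: that claim \emph{is} the Hartshorne--Speiser--Lyubeznik theorem, in the sense that it isolates exactly the part of the statement that does not follow from elementary chain arguments. Note in particular that the property $L_N = R\cdot x^k L_N$ does not visibly transfer to the $x$-torsion submodule $\Gamma_x(L_N)$, so you cannot simply iterate; and over an imperfect residue field the $p$-semilinear map $x$ on $L_N$ does not preserve the socle $(0:_{L_N}\fm)$, so a naive reduction to the residue field also fails. (When $R$ is a field the claim does become elementary: write $x(v) = A\sigma(v)$ on $k^d$ with $\sigma$ the coordinate-wise Frobenius; the hypothesis $k\cdot xL = L$ forces $A$ to be invertible, hence $\ker x = 0$. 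But the passage from this to a general Artinian module over a local ring is exactly where the real work lies.) As it stands, the proposal identifies the right structural picture --- a ``stable'' part $L_N$ and a ``nilpotent'' part $\Gamma_\infty$ that you want to separate --- but it does not prove the separation, and that is the theorem. You should either give an actual argument for the vanishing $L_N\cap\Gamma_\infty = 0$ (for instance, along the lines of Lyubeznik's Proposition 4.4 or Sharp's proof in \cite{HSLonly}) or acknowledge that this step is being taken as a black box.
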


Hartshorne and Speiser first proved this result in the case where
$R$ is local and contains its residue field which is perfect.
Lyubeznik applied his theory of $F$-modules to obtain the result
without restriction on the local ring $R$ of characteristic $p$.
There is a short proof of the Hartshorne--Speiser--Lyubeznik Theorem
in \cite{HSLonly}. It was shown in \cite[Corollary 1.8]{mmj} that
the result is still valid if the hypothesis that $R$ be local is
dropped.

The Hartshorne--Speiser--Lyubeznik Theorem has been used to
establish the existence of uniform test exponents for Frobenius
closures of parameter ideals in local rings in certain
circumstances. Let $\fa$ be an ideal of $R$; let $n \in \nn$. Recall
that the {\em $n$-th Frobenius power\/} $\fa^{[p^n]}$ of $\fa$ is
the ideal of $R$ generated by all $p^n$-th powers of elements of
$\fa$. The {\em Frobenius closure $\fa^F$} of $\fa$ is defined by
$$
\fa ^F := \big\{ r \in R \ | \ \mbox{there exists~} n\in\nn
\mbox{~such that~} r^{p^n} \in \fa^{[p^n]}\big\}.
$$
This is an ideal of $R$, and so is finitely generated; therefore
there exists a power $Q_0$ of $p$ such that $(\fa^F)^{[Q_0]} =
\fa^{[Q_0]}$, and we define $Q(\fa)$ to be the smallest power of $p$
with this property. In \cite[Theorem 2.5]{KS}, M. Katzman and Sharp
used the Hartshorne--Speiser--Lyubeznik Theorem to show that, when
$(R,\fm)$ is local and Cohen--Macaulay, the set
$$
\{Q(\fa) : \fa \mbox{~is an ideal generated by part of a system of
parameters of~}R\}
$$
is bounded; in \cite{HuKSY}, C. Huneke, Katzman, Sharp and Y. Yao
again used the Hartshorne--Speiser--Lyubeznik Theorem (and quite a
few other techniques) to establish the same conclusion in a
generalized Cohen--Macaulay local ring.

We are able to use our above-mentioned equivalence of categories
to prove the following result (as Theorem \ref{appl.1}),
in the case where $R$ is $F$-finite, local and complete.

\vspace{0.1in}

\noindent{\bf Theorem.} {\it
Assume that $(R,\fm)$ is $F$-finite, local and complete.
Let $N$ be a right $R[x,f]$-module that is Noetherian as $R$-module.
Then there exists $e \in \nn$ such that $Nx^e = Nx^{e+1}$.}

\vspace{0.1in}

This result can be viewed as a dual of the
Hartshorne--Speiser--Lyubeznik Theorem. A natural question is
whether this `dual Hartshorne--Speiser--Lyubeznik Theorem' is still
valid if all the hypotheses about $R$, except the one that it (is a
commutative Noetherian ring and) has characteristic $p$, are
dropped: we shall show, in the final section of the paper, that this
question has an affirmative answer.

Another useful result about left $R[x,f]$-modules that are Artinian
as $R$-modules concerns graded annihilators: the {\em graded
annihilator\/} of a (left or right) $R[x,f]$-module $T$ is the
largest graded two-sided ideal of $R[x,f]$ that annihilates $T$.

\begin{thm} [R. Y. Sharp {\cite[Corollary 3.11]{ga}}]
\label{in.2} Let $G$ be a left $R[x,f]$-module that is Artinian as
$R$-module. Suppose that $G$ is $x$-torsion-free, that is, $xg = 0$
for $g \in G$ implies that $g = 0$. Then there are only finitely
many graded annihilators of $R[x,f]$-submodules of $G$.
\end{thm}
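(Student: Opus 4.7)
The strategy is to assign to each graded annihilator a ``$G$-special'' ideal of $R$ drawn from a finite pool, and to control the remaining data using the Artinian hypothesis on $G$.

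Every graded two-sided ideal of $R[x,f]$ has the form $\bigoplus_{n \in \nn} \fa_n x^n$ for an ascending chain $\fa_0 \subseteq \fa_1 \subseteq \cdots$ of ideals of $R$. For an $R[x,f]$-submodule $H$ of $G$, one has $\grann H = \bigoplus_n \ann_R(x^n H) \, x^n$; and since $R x^n H$ is itself an $R[x,f]$-submodule of $G$ with $\ann_R(R x^n H) = \ann_R(x^n H)$, the Artinian hypothesis on $G$ forces the descending chain $H \supseteq R x H \supseteq R x^2 H \supseteq \cdots$ of $R[x,f]$-submodules to stabilize at a ``Frobenius-stable'' submodule $H_\infty$ with $R x H_\infty = H_\infty$. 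A direct calculation using this stability gives $\grann H_\infty = \bigoplus_n \fa_\infty(H) \, x^n$ for $\fa_\infty(H) := \ann_R H_\infty$, so the graded annihilator of a Frobenius-stable submodule is determined by a single ideal of $R$.

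Call a prime $\fp$ of $R$ \emph{$G$-special} if $\fp = \ann_R K$ for some nonzero Frobenius-stable $R[x,f]$-submodule $K$ of $G$. The heart of the proof is to show every $G$-special prime lies in $\Ass_R G$. I would attack this via a ``maximal-annihilator-is-prime'' argument inside the Frobenius-stable range, using the $x$-torsion-free hypothesis crucially to guarantee that the Frobenius orbits $\{x^n k : n \in \nn\}$ used to extract a witness element never collapse to zero. Since $G$ is Artinian over a Noetherian ring, $\Ass_R G$ is finite; a primary-decomposition-style argument then presents every (not necessarily prime) $G$-special ideal as a finite intersection of $G$-special primes, yielding only finitely many graded annihilators of Frobenius-stable submodules.

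For a general $H$, the graded annihilator $\grann H$ differs from $\grann H_\infty$ only in the initial segment $(\fa_0(H), \ldots, \fa_{N-1}(H))$ preceding stabilization. I would finish by showing that the stabilization index $N$ is uniformly bounded and that each $\fa_n(H)$ arises as the annihilator of a Frobenius-compatible subquotient of $G$, hence drawn from the finite pool of $G$-special ideals; so only finitely many sequences, and therefore only finitely many graded annihilators, arise.

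\textbf{Main obstacle.} The delicate step is upgrading a $G$-special prime to an element of $\Ass_R G$. Because $x$ acts $p$-semilinearly, one has only $\ann_R(x k) \supseteq (\ann_R k)^{[p]}$ rather than equality with $\ann_R k$, so the classical maximal-annihilator argument must be rebuilt in the semilinear setting. The $x$-torsion-free hypothesis is essential at precisely this point: without it, Frobenius orbits can collapse and the would-be witness of associated-primeness disappears.
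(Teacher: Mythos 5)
First, a bookkeeping point: the paper does not prove Theorem \ref{in.2}; it quotes it as \cite[Corollary 3.11]{ga} (and later, in the proof of Theorem \ref{appl.3}, cites \cite[Lemma 1.9, Definition 1.10 and Corollary 3.11]{ga}). So there is no in-paper proof to compare against; I can only assess your sketch on its own terms and against what is known of Sharp's argument in \cite{ga}.

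The central missed observation is that the $x$-torsion-free hypothesis already forces $\grann_{R[x,f]} H = (\ann_R H)\, R[x,f]$ for \emph{every} $R[x,f]$-submodule $H$ of $G$, with $\ann_R H$ a radical ideal. Writing $\grann_{R[x,f]} H = \bigoplus_n \fa_n x^n$, one has $\fa_{n-1}\subseteq \fa_n$ automatically, and conversely if $r x^n h = 0$ for all $h\in H$ then $x\bigl(rx^{n-1}h\bigr) = r^{p-1}\bigl(rx^n h\bigr) = 0$, so $rx^{n-1}h=0$ by torsion-freeness; hence $\fa_n = \fa_0 = \ann_R H$ for all $n$. A similar use of torsion-freeness shows $\ann_R H$ is radical. (This is essentially Sharp's Lemma 1.9 and Proposition 1.11 of \cite{ga}, the backbone of Corollary 3.11.) Once this is in hand, there is no ``initial segment'' of the graded annihilator to control at all: your whole mechanism of passing from $H$ to the Frobenius-stable $H_\infty$ and then worrying about the finitely many degrees before stabilization is a detour around a fact you already have for free. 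In particular, the claims near the end of your sketch --- that the stabilization index $N$ is uniformly bounded over all $H$, and that the ideals $\fa_0(H),\ldots,\fa_{N-1}(H)$ lie in the finite pool of $G$-special ideals --- are both unsubstantiated (the ideals $\ann_R(Rx^nH)$ for $n<N$ are annihilators of submodules that are \emph{not} Frobenius-stable, so they don't a priori belong to the pool you've built), and both become moot once you notice $\fa_n$ is constant.

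The step you flag yourself as ``the delicate step'' --- upgrading a $G$-special prime to an element of $\Ass_R G$ via a rebuilt maximal-annihilator argument in the $p$-semilinear setting --- is indeed a genuine gap that you leave unfilled, and it carries the real content of the finiteness assertion. So as it stands the sketch has two problems: the reduction to a finite pool of radical ideals is routed through an unnecessary and under-justified stabilization argument instead of the clean $x$-torsion-free collapse, and the finiteness of the pool itself is left as a ``to do.'' The correct shape of the argument is: (1) use $x$-torsion-freeness to identify graded annihilators of submodules with radical ideals of $R$ (as above), and (2) use the Artinian hypothesis and a prime-annihilator analysis to show only finitely many such radical ideals occur. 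Your sketch gestures at (2) via associated primes, which is the right kind of idea, but neither direction is carried through.
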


The first author has been able to use this result to prove existence
theorems about tight closure test elements: see \cite[Theorem
4.16]{Fpur}.

We are able to use our above-mentioned equivalence of categories to
prove the following result (as Theorem \ref{appl.3}), in the case
where $R$ is $F$-finite, local and complete.

\vspace{0.1in}

\noindent{\bf Theorem.} {\it Assume that $(R,\fm)$ is $F$-finite,
local and complete. Let $M $ be a right $R[x,f]$-module that is
Noetherian as $R$-module. Suppose that $M$ is $x$-divisible, that is
$M = Mx$. Then there are only finitely many graded annihilators of
$R[x,f]$-homomorphic images of $M$.}

\vspace{0.1in}

Again, it is natural to ask whether this result is still valid if
all the hypotheses about $R$, except the one that it (is a
commutative Noetherian ring and) has characteristic $p$, are
dropped.  At the time of writing, we have not been able to answer to
this question.

\vspace{0.1in}

{\bf Note.}  Most of the research reported in this paper was carried
out during a visit by Sharp to the University of Okayama in March
2008.  After the paper had been accepted, it was pointed out to us
that some of its results have been independently obtained by M.
Blickle and G. Boeckle in their paper \cite{BB}. In detail, Theorem
1.20 below appears in \cite[Section 5.1]{BB}, and the result of
Theorem 3.4 below follows from \cite[Proposition 2.14]{BB} (which
Blickle and Boeckle prove via an argument of O. Gabber from
\cite[Section 13]{Gabber}).

\section{\sc Right and left modules over the Frobenius skew polynomial ring}
\label{nt}

The notation and terminology used in the Introduction will be used throughout the paper.

First of all, let us recall some of the basic facts about bimodules,
which we shall use in the rest of the paper.  See, for example,
Rotman \cite[Lemma 8.80, Theorem 8.99]{Rotman02}.

\begin{rmk}\label{bimod}
Let $A$, $B$, $C$ and $D$ be commutative rings.

\begin{itemize}
\item[{(\rm i)}]
An Abelian group $M$ is an $(A, B)$-bimodule if $M$ is a left
$A$-module, a right $B$-module and the two actions of the rings are
related by the following rule:
$$
(am)b=a(mb) \quad \text{for all}\ a \in A, \ b \in B \ \text{and}\ m
\in M.
$$

\item[{(\rm ii)}]
If $M$ is an $(A, B)$-bimodule and $N$ is a $(B, C)$-bimodule, then
$M \otimes _{B} N$ is naturally an $(A, C)$-bimodule, where the
bimodule structure is given by
$$
a(m\otimes n)c=(am)\otimes (nc) \quad \text{for all}\ a \in A, \ c
\in C, \ m \in M \ \text{and} \ n \in N.
$$

\item[{(\rm iii)}]
If $M$ is an $(A, B)$-bimodule and $N$ is an $(A, C)$-bimodule, then
the set of all left $A$-homomorphi\-sms from $M$ to $N$, denoted by
$\Hom_{lA} (M, N)$, is  naturally a $(B, C)$-bimodule, where
$$
(b\varphi c)(m)=(\varphi (mb))c \quad  \text{for all}\ b \in B, \ c
\in C, \ m \in M \ \text{and}\ \varphi \in \Hom_{lA} (M, N).
$$
Similarly if $M$ is an $(A, B)$-bimodule and $N$ is a $(C,
B)$-bimodule, then the set of all right $B$-homomorphisms from $M$
to $N$, denoted by $\Hom_{rB} (M, N)$, is naturally a $(C,
A)$-bimodule, where
$$
(c\psi a)(m)=c (\psi (am)) \quad  \text{for all}\ c \in C, \ a \in
A, \ m \in M   \ \text{and}\ \psi \in \Hom_{rB} (M, N).
$$

\item[{(\rm iv)}]
If $M$ is an $(A, B)$-bimodule, $N$ is a $(B, C)$-bimodule, and $L$
is an $(A, D)$-bimodule, then there exists a $(C, D)$-bimodule
isomorphism, the so-called {\em adjoint isomorphism},
$$
\Xi : \Hom_{lA} (M\otimes _{B} N, L) \stackrel{\cong}{\lra}
\Hom_{lB} (N, \Hom_{lA} (M, L))
$$
which is such that
$$
((\Xi(\phi))(n))(m) = \phi(m\otimes n)  \quad \text{for all}\ n \in
N, \ m \in M   \ \text{and}\ \phi \in \Hom_{lA} (M\otimes_B N,L).
$$

\item[{(\rm v)}] Similarly, if $M$ is an $(A, B)$-bimodule, $N$ is a $(B, C)$-bimodule, and $L$
is a $(D, C)$-bimodule, then there exists an `adjoint' $(D,
A)$-bimodule isomorphism
$$
\Theta : \Hom_{rC} (M\otimes _{B} N, L) \stackrel{\cong}{\lra}
\Hom_{rB} (M, \Hom_{rC} (N, L))
$$
which is such that
$$
((\Theta(\varphi))(m))(n) = \varphi(m\otimes n) \quad \text{for
all}\ m \in M, \ n \in N    \ \text{and}\ \varphi \in \Hom_{rC}
(M\otimes_B N,L).
$$

\item[{(\rm vi)}]
If $M$ is an $(A, B)$-bimodule, $N$ is a $(C, D)$-bimodule, and
$L$ is an $(A, D)$-bimodule,
then there exists a $(B, C)$-bimodule isomorphism
$$
\Omega : \Hom_{lA} (M, \Hom_{rD}(N, L)) \cong \Hom_{rD} (N,
\Hom_{lA} (M, L))
$$
for which
$$
((\Omega(\psi))(n))(m) = (\psi(m))(n) \quad \text{for all}\ m \in M,
\ n \in N    \ \text{and}\ \psi \in \Hom_{lA} (M, \Hom_{rD}(N, L)).
$$
\end{itemize}
\end{rmk}

Recall that $R$ denotes a commutative Noetherian ring of prime
characteristic $p$ and that $f: R \lra R$ denotes the Frobenius
homomorphism.  We shall only assume that $R$ is $F$-finite when this
is explicitly stated.

Let  $M$  be an $R$-module. We always regard $M$ as an $(R,
R)$-bimodule by $r\cdot m \cdot s = rsm$  for  $r, s \in R$  and  $m
\in M$. On the other hand, we define the $(R, R)$-bimodule $M_{f}$
to be $M_{f}=M$ as Abelian group with $(R, R)$-bimodule structure
defined by
$$
r\cdot m\cdot s=rs^pm \quad \text{for all~} r,s \in R \ \text{and} \  m \in M.
$$
Note that  the Frobenius map  $f : R \to R_f$  is a right $R$-module
homomorphism.

Similarly, we define the $(R, R)$-bimodule $_{f}M$ to be  $_{f}M=M$
as Abelian group with $(R, R)$-bimodule structure defined by
$$
r\cdot m\cdot s=r^p sm \quad \text{for all~} r,s \in R \ \text{and} \  m \in M.
$$

\begin{rmk}\label{rem}
Let  $M$  be an $R$-module.

\begin{itemize}
\item[{(\rm i)}]
By \ref{bimod}(ii), $M \otimes _R R_f$  has naturally a structure of  $(R, R)$-bimodule.
The action of $R$  is given by
$$
s\cdot (m \otimes r) \cdot s' =  srs'^p m \otimes 1 \quad \text{for
all~} r,s,s' \in R \ \text{and} \  m \in M.
$$
Thus there is an isomorphism
$M_{f}\cong M\otimes _{R} R_{f}$ as $(R, R)$-bimodules.

\item[{(\rm ii)}]
Similarly,  ${}_f R \otimes _R M$  is an $(R, R)$-bimodule with
action given by
$$
s\cdot (r \otimes m) \cdot s' = 1 \otimes s^p rs' m  \quad \text{for
all~} r,s,s' \in R \ \text{and} \  m \in M.
$$
There is an isomorphism  ${}_{f}M \cong {}_{f} R \otimes _{R} M$ of
$(R, R)$-bimodules.

\item[{(\rm iii)}]
By \ref{bimod}(iii), the Abelian group  $\Hom _{lR} (R_f , M)$
consisting of all left  $R$-homomorphisms from  $R_f$  to  $M$  is
an $(R, R)$-bimodule  with action of  $R$  given by
$$
(s\varphi s')(r) = (\varphi (r\cdot s))s' = rs^ps' \varphi (1) \quad
\text{for all~} r,s,s' \in R \ \text{and} \ \varphi \in \Hom
_{lR}(R_f, M).
$$
It is easy to see that
$\Hom_{lR}(R_{f}, M) \cong {}_{f}M$
as $(R, R)$-bimodules.

\item[{(\rm iv)}]
Similarly, the set $\Hom _{rR}(R_f, M)$  of all right
$R$-homomorphisms from  $R_f$  to  $M$ is an  $(R, R)$-bimodule with
$$
(s\psi s')(r) = s \psi (s'r) \quad \text{for all~} r,s,s' \in R \
\text{and} \ \psi \in \Hom _{rR}(R_f, M).
$$
\end{itemize}
\end{rmk}

We shall use a refinement of the following result.

\begin{lem} [Y. Yoshino {\cite[Lemma 3.6]{Yoshi94}}]\label{yoshino}
Suppose that $(R,\fm)$ is local and $F$-finite. Denote by $E$ the
injective envelope $E_R(R/\fm)$ of the simple $R$-module, which we
regard as a right $R$-module. Then there is a right $R$-module
isomorphism $E \stackrel{\cong}{\lra} \Hom_{rR}(R_f , E)$, where
$\Hom_{rR}(R_f, E)$ carries the right $R$-module structure described
in Remark\/ {\rm \ref{rem}(iv)}.
\end{lem}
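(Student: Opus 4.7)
The aim is to produce a right-$R$-module isomorphism $\Phi\colon E\to \Hom_{rR}(R_f,E)$. I would proceed in three stages.

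\emph{Reduction to the complete case.} The injective hull $E=E_R(R/\fm)$ is canonically a module over the $\fm$-adic completion $\widehat R$, which is again $F$-finite and local. Because $R_f$ is finitely generated as a right $R$-module (an immediate consequence of the $F$-finite hypothesis), the natural map $\Hom_{rR}(R_f,E)\to\Hom_{r\widehat R}(\widehat R_f,E)$ is an isomorphism, so I may assume $R$ is complete. Unpacking the Hom: an element $\psi\in\Hom_{rR}(R_f,E)$ is an additive map $\psi\colon R\to E$ satisfying $\psi(s^pr)=\psi(r)s$ for all $r,s\in R$, and such a $\psi$ is determined by its values on any finite $R^p$-generating set of $R$ (which exists by $F$-finiteness).

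\emph{Construction via Frobenius duality on the canonical module.} In the complete local $F$-finite setting, $R$ admits a canonical module $\omega_R$ (for instance by Gabber's theorem on existence of dualizing complexes on $F$-finite schemes, or directly from Cohen's structure theorem together with the fact that $R$ is a homomorphic image of a power-series ring over an $F$-finite field). A form of Grothendieck/Frobenius duality for the finite ring map $f\colon R\to R$ then supplies a natural $R$-linear isomorphism $\Hom_R(R_f,\omega_R)\cong \omega_R$. Applying the Matlis-dual functor $\Hom_R(-,E)$ to this identification and then invoking the adjoint isomorphism of Remark~\ref{bimod}(v), with a careful assignment of the $(A,B,C,D)$-bimodule slots, converts it into the desired right-$R$-module isomorphism $E\cong\Hom_{rR}(R_f,E)$.

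\emph{Verification.} Finally I would check that the map $\Phi$ produced this way is right $R$-linear for the bimodule structure on $\Hom_{rR}(R_f,E)$ spelled out in Remark~\ref{rem}(iv); this follows directly from the naturality of the adjoint isomorphism. Bijectivity is then inherited from bijectivity of the Frobenius-duality identification for $\omega_R$ together with classical Matlis duality (since $R$ has been assumed complete).

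The main obstacle, and the place where the $F$-finite hypothesis is indispensable, is the duality identification $\Hom_R(R_f,\omega_R)\cong \omega_R$. Without the $F$-finite hypothesis, $f$ is not a finite ring map, $R_f$ need not be finitely generated as a right $R$-module, and the requisite Frobenius-trace isomorphism is unavailable; with the hypothesis, it is a standard input from the duality theory for finite morphisms. Once it is in hand, the remainder of the argument amounts to carefully tracking which of the two sides of a bimodule governs which action after each Matlis-dual or adjoint operation, which is precisely the bookkeeping that the formalism of Remarks~\ref{bimod} and~\ref{rem} is designed to manage.
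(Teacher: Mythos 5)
The paper does not prove this lemma: it is quoted verbatim from Yoshino's 1994 paper as \cite[Lemma 3.6]{Yoshi94}, and no argument is given, so there is no in-text proof to compare against. Judged on its own merits, your sketch identifies the right mechanism ($F$-finiteness making the Frobenius a finite map, so that duality for finite morphisms applies), but the specific chain you propose has a genuine gap at its central step.

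You propose to deduce $E\cong\Hom_{rR}(R_f,E)$ by Matlis-dualizing the Frobenius-trace isomorphism $\Hom_R(R_f,\omega_R)\cong\omega_R$ and then invoking the adjoint isomorphism of Remark~\ref{bimod}(v). But applying $\Hom_R(-,E)$ to that isomorphism, and using the Hom--tensor interchange valid because $R_f$ is finitely generated over $R$, gives
\[
R_f\otimes_R\Hom_R(\omega_R,E)\;\cong\;\Hom_R(\omega_R,E),
\]
and $\Hom_R(\omega_R,E)$ is, by local duality, the top local cohomology module $H^{\dim R}_\fm(R)$, not $E$. What you have produced is the Matlis-dual statement, namely that $H^{\dim R}_\fm(R)$ carries a natural Frobenius action (its left $R[x,f]$-module structure, i.e.\ Lyubeznik's $F$-module structure); this coincides with the desired assertion about $E$ only when $\omega_R\cong R$, i.e.\ in the Gorenstein case. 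No rearrangement of the $(A,B,C,D)$-slots in Remark~\ref{bimod}(v) can repair this, because $\Hom_R(\omega_R,E)$ and $E$ are genuinely different modules when $R$ is not Gorenstein. The correct input is more elementary than a canonical module: for any module-finite local homomorphism $(R,\fm)\to(S,\fn)$ of Noetherian local rings one has $\Hom_R(S,E_R(R/\fm))\cong E_S(S/\fn)$ as $S$-modules (see, for instance, \cite[\S18]{HM}); taking $S=R$ with structure map the Frobenius $f$ yields exactly $\Hom_{rR}(R_f,E)\cong E$, with the $S$-module structure of the Hom unwinding to the right $R$-module structure of Remark~\ref{rem}(iv), and with no completion step needed at all. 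If you do wish to route through dualizing complexes, the functor to apply to $R\Hom_R(R_f,\omega_R^{\bullet})\cong\omega_R^{\bullet}$ is $R\Gamma_\fm$, not $\Hom_R(-,E)$: since $R\Gamma_\fm(\omega_R^{\bullet})\cong E$ and $R_f$ is finitely generated, $R\Gamma_\fm$ commutes past $R\Hom_R(R_f,-)$ and delivers $\Hom_R(R_f,E)\cong E$ directly. Finally, your reduction to the complete case, while plausible, itself quietly uses that $\widehat{R_f}\cong(\widehat R)_{\hat f}$ as bimodules, which rests on cofinality of the ordinary and Frobenius $\fm$-adic filtrations; the direct argument above makes this unnecessary.
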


We shall use the following refinement, in which it is not assumed
that $R$ is local.

\begin{lem}
\label{main lemma}
Suppose that $R$ is $F$-finite, and let $I$ be an injective $R$-module.
Then there is an $(R, R)$-bimodule isomorphism
$$
\Hom _{l R} (R_f, I) \stackrel{\cong}{\lra} \Hom_{rR}(R_f, I).
$$
\end{lem}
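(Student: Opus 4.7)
The plan is to reduce the statement to Yoshino's Lemma~\ref{yoshino} by three successive reductions, with the last step requiring a bimodule-compatibility check.

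By Remark~\ref{rem}(iii), evaluation at $1 \in R_f$ is an $(R,R)$-bimodule isomorphism $\Hom_{lR}(R_f,I) \cong {}_fI$, so the task becomes the construction of a natural $(R,R)$-bimodule isomorphism ${}_fI \to \Hom_{rR}(R_f,I)$ for every injective $R$-module $I$.

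I would then reduce to the case of an indecomposable injective $I = E_R(R/\fp)$. Every injective $R$-module is a direct sum of such indecomposables; the functor ${}_f(-)$ commutes with arbitrary direct sums trivially, while $\Hom_{rR}(R_f,-)$ does likewise because $R_f$ is finitely generated as a right $R$-module (this is the only place the $F$-finite hypothesis is used at this stage). A further localization reduction puts one in the local case: the ring $R_\fp$ is again $F$-finite (finite ring maps localize), and $E_R(R/\fp) = E_{R_\fp}(R_\fp/\fp R_\fp)$ is canonically an $R_\fp$-module. The Hom-localization adjunction, combined with the $(R,R_\fp)$-bimodule identification $R_f \otimes_R R_\fp \cong (R_\fp)_f$, then yields an $(R,R)$-bimodule isomorphism
$$
\Hom_{rR}(R_f, E_R(R/\fp)) \;\cong\; \Hom_{rR_\fp}((R_\fp)_f, E_{R_\fp}(R_\fp/\fp R_\fp)),
$$
while ${}_fE$ is computed identically over $R$ and over $R_\fp$. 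Hence one may assume $(R,\fm)$ is local and $F$-finite, with $I = E := E_R(R/\fm)$.

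In this local $F$-finite setting, Yoshino's Lemma~\ref{yoshino} supplies a right $R$-module isomorphism $\iota\colon E \to \Hom_{rR}(R_f,E)$. The main remaining step, which I expect to be the chief obstacle, is to verify that $\iota$ is \emph{also} equivariant for the Frobenius-twisted left $R$-action on ${}_fE$: concretely, that $\iota(r^p e) = r \cdot \iota(e)$ inside $\Hom_{rR}(R_f,E)$ for every $r \in R$ and $e \in E$. This requires unpacking the explicit form of Yoshino's map, which is built from the presentation of $R_f$ as a finitely generated right $R$-module and the Matlis-dual realisation of $E$; the right $R$-linearity of $\iota$ together with the naturality of the construction should leave only the Frobenius-twisted left equivariance as a possibility. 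Once this compatibility is confirmed, $\iota$ upgrades to the desired $(R,R)$-bimodule isomorphism, and the lemma is proved.
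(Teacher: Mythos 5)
Your reduction strategy (evaluate at $1$ to replace $\Hom_{lR}(R_f,I)$ by ${}_fI$, decompose into indecomposable injectives using $F$-finiteness, then localize and invoke Yoshino's Lemma \ref{yoshino}) mirrors the paper's proof closely. The one place where you mis-estimate the work is the step you flag as the ``chief obstacle.'' You propose to ``unpack the explicit form of Yoshino's map'' in order to check the Frobenius-twisted left equivariance $\iota(r^p e) = r\cdot\iota(e)$. This is unnecessary: the left equivariance is a formal consequence of right $R$-linearity alone, and does not depend at all on which right $R$-module isomorphism one has in hand. Indeed, on ${}_fI$ one has $a\cdot z = a^p z = z\cdot a^p$, while on $\Hom_{rR}(R_f,I)$ one computes, for $\psi \in \Hom_{rR}(R_f,I)$ and $r\in R_f$, that $(\psi\cdot a^p)(r) = \psi(a^p r) = \psi(r\cdot a) = \psi(r)\,a = a\,\psi(r) = (a\cdot\psi)(r)$; so on both bimodules, left action by $a$ coincides with right action by $a^p$. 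Consequently every right $R$-module homomorphism ${}_fI \to \Hom_{rR}(R_f,I)$ is automatically left $R$-linear. This is exactly the chain of equalities the paper displays at the end of its proof.

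A second point worth flagging: your proposed reduction transports an $(R,R)$-bimodule isomorphism through localization, asserting $\Hom_{rR}(R_f, E_R(R/\fp)) \cong \Hom_{rR_\fp}((R_\fp)_f, E_{R_\fp}(R_\fp/\fp R_\fp))$ and that ``${}_fE$ is computed identically over $R$ and over $R_\fp$.'' This requires care, because the identification $(R_f)_\fp \cong (R_\fp)_f$ involves a nontrivial Frobenius twist (the paper's map $\beta$ sends $r/s \mapsto r/s^p$), and the left $R_\fp$-structure one must place on $(R_f)_\fp$ is likewise twisted. The paper sidesteps this delicacy: it uses localization only to show that $\Hom_{rR}(R_f, E_R(R/\fp))$ is a right $R$-module direct sum of $\mu$ copies of $E_R(R/\fp)$ with $\mu = 1$ (a purely right-module count), and then invokes the formal observation above to promote that right $R$-isomorphism to a bimodule one. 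If you follow your route, you will need to carry the twisted bimodule structures through the localization identifications; the paper's route avoids having to do so.
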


\begin{proof}
It is a consequence of the adjoint isomorphism of Remark
\ref{bimod}(v) that $\Hom_{rR}(R_f, I)$ is injective as right
$R$-module. On the other hand, by \ref{rem}(iii), we have an
isomorphism of $(R, R)$-bimodules $\Hom _{l R} (R_f, I) \cong {}_f
I$.

We can use the well-known decomposition theory for injective
$R$-modules due to E. Matlis (reviewed in, for example,
\cite[\S18]{HM}) to see that it is enough for us to prove the result
when $I = E_R(R/\fp)$ for a prime ideal $\fp$ of $R$, and so we
assume that this is so in the rest of the proof.

Since, for a prime ideal $\fq$ of $R$, each element of $E_R(R/\fq)$
is annihilated by some power of $\fq$, and multiplication by an
element $r \in R \setminus \fq$ provides an automorphism of
$E_R(R/\fq)$, it follows that $\Hom_{rR}(R_f, E_R(R/\fp))$ (with the
right $R$-module structure described in Remark\/ {\rm
\ref{rem}(iv)}) is isomorphic to a direct sum of $\mu$ copies of
$E_R(R/\fp)$. First we prove that the cardinal $\mu$ is exactly $1$.
Thus
$$
\Hom_{rR}(R_f, E_R(R/\fp)) \cong \bigoplus \mu
\left(E_{R}(R/\fp)\right),
$$
as right $R$-modules. We consider $(R_f)_{\fp}$ as the localization
of the right $R$-module $R_f$ at $\fp$ and write the resulting
action of $R_{\fp}$ on the right. (Thus $(r/s) \cdot (a/t) =
ra^p/st$ for $r \in R_{f}, \ a \in R$ and $s,t \in R \setminus
\fp$.) We can also endow this $(R_f)_{\fp}$ with a left
$R_{\fp}$-module structure under which
$$
\left(\frac{a}{t}\right)\cdot\left(\frac{r}{s}\right) =
\frac{t^{p-1}ar}{st} \quad \text{for all~}r \in R_{f}, \ a \in R \
\text{and} \ s,t \in R \setminus \fp.
$$
These two structures turn $(R_f)_{\fp}$ into an $(R_{\fp},
R_{\fp})$-bimodule, and then there is an $(R_{\fp},
R_{\fp})$-bimodule isomorphism $\beta : (R_f)_{\fp}
\stackrel{\cong}{\lra} (R_{\fp})_f$ for which $\beta(r/s) = r/s^p$
for all $r \in R_f$ and $s \in R \setminus \fp$. Since $R_f$ is
finitely generated as right $R$-module, there is a right
$R_{\fp}$-module isomorphism
$$
\left(\Hom_{rR}(R_f, E_R(R/\fp)) \right)_{\fp} \cong
\Hom_{rR_{\fp}}\!\left((R_f)_{\fp},(E_R(R/\fp))_{\fp}\right)
$$
when $\Hom_{rR}(R_f, E_R(R/\fp))$ (respectively
$\Hom_{rR_{\fp}}\!\left((R_f)_{\fp}, (E_R(R/\fp))_{\fp} \right)$) is
considered as a right $R$-module (respectively a right
$R_{\fp}$-module) via Remark \ref{bimod}(iii). One can use this
isomorphism, and the isomorphism $\beta$ above, to see that there is
a right $R_{\fp}$-module isomorphism
$$
\left(\Hom_{rR}(R_f, E_R(R/\fp)) \right)_{\fp} \cong
\Hom_{rR_{\fp}}\!\left((R_{\fp})_f, E_{R_{\fp}}(R_{\fp}/\fp
R_{\fp})\right).
$$
The last module is right $R_{\fp}$-isomorphic to
$E_{R_{\fp}}(R_{\fp}/\fp R_{\fp})$ by Lemma \ref{yoshino}. Therefore
$\mu = 1$.

We have thus shown that there is a right $R$-module isomorphism
$\varphi : I \to \Hom_{rR}(R_f, I)$. To finish the proof, we show
that this mapping  $\varphi$, regarded as a mapping ${}_f I \to
\Hom_{rR}(R_f, I)$, is actually a left $R$-module homomorphism, and
therefore an $(R, R)$-bimodule isomorphism.  For $z \in {}_fI$ and
$a \in R$, we have, for all $r \in R_f$,
\begin{align*}
\varphi (a\cdot z) (r) & = \varphi (za^p)(r) = (\varphi (z)a^p)(r)=
\varphi (z)(a^pr)\\ & = \varphi (z)(r \cdot a) = (\varphi (z)(r))a =
a(\varphi (z)(r)) = (a\varphi (z))(r),
\end{align*}
so that $\varphi (a\cdot z) = a \varphi (z)$. Therefore $\varphi$ is
a left $R$-homomorphism.
\end{proof}

\begin{rmk} If, in Lemma \ref{main lemma}, we drop the hypothesis
that $R$ is $F$-finite, then the conclusion is no longer always
true. For one example, let  $K$  be a countable field of
characteristic $p$ with $[K : K^p]$ infinite but countable, and set
$R =K$. We show now that $\Hom _{rK}(K_f, K) \not\cong {}_fK$ as
right $K$-modules. Assume that $\Hom _{rK}(K_f, K) \cong {}_fK$ as
right $K$-modules and seek a contradiction.

Let $\overline{K}$ be an algebraic closure of $K$, and let $K^{1/p}$
denote the subfield of $\overline{K}$ consisting of all $p$th roots
of elements of $K$. The assumption implies that $\Hom _{K}(K^{1/p},
K) \cong K^{1/p}$ as $K^{1/p}$-modules. In particular, $\Hom
_{K}(K^{1/p}, K)$ has countable dimension as a vector space over
$K$. Let $( \alpha _{n})_{n\in\N}$ be a $K$-basis of $K^{1/p}$, so
that $K^{1/p} = \bigoplus _{n\in\N} K \alpha _{n}$. Then $\Hom
_{K}(K^{1/p}, K) = \Hom_K\left(\bigoplus _{n\in\N} K \alpha _{n},
K\right) \cong \prod_{n\in\N} \Hom_K(K \alpha _{n},K)$, and this has
uncountable dimension as a vector space over $K$, and this is a
contradiction.
\end{rmk}

\begin{disc}
\label{ltf} The Frobenius skew polynomial ring   $R[x, f]$  was
defined in the Introduction. It follows from \cite[Lemma 1.3]{KS}
that extension of the $R$-module structure on an $R$-module $H$ to a
structure of left $R[x,f]$-module is equivalent to the provision of
an Abelian group homomorphism $\xi : H \lra H$ for which $\xi(rh) =
r^p\xi(h)$ for all $r \in R$ and $h \in H$. (In fact, $\xi$ and the
action of $x$ are related by the formula $\xi(h) = xh$ for all $h
\in H$.)

There is a bijective correspondence between
$\Hom_{lR}(R_f\otimes_RH,H)$ and
$$
\left\{ \xi \in \Endo_{\Z}(H) \ | \ \xi(rh) = r^p\xi(h)\text{~for
all~}r \in R\text{~and~}h \in H\right\}
$$
under which $\alpha\in \Hom_{lR}(R_f\otimes_RH,H)$ corresponds to $h
\mapsto \alpha(1 \otimes h)$. In view of this, we are going to use
the notation $(H, \alpha)$  to describe a left  $R[x, f]$-module
$\H$,  where  $H$  is the underlying  $R$-module and  $\alpha \in
\Hom_{lR}(R_{f} \otimes_{R} H,  H)$ is such that $xh =
\alpha(1\otimes h)$ for all $h \in H$.

Under the adjoint isomorphism of Remark \ref{bimod}(iv), an $\alpha
\in \Hom_{lR}(R_{f} \otimes_{R} H,  H)$ corresponds to an
$\widetilde{\alpha} \in \Hom _{lR} (H , \Hom _{lR}(R_f, H))$. Note
that $xh = (\widetilde{\alpha}(h))(1)$ for all $h \in H$. We write
$\H = (H, \alpha) = [H, \widetilde{\alpha}]$.

With such notation, a left  $R[x, f]$-homomorphism  $\H  =  (H,
\alpha) \to \H' = (H', \alpha ')$  of left $R[x,f]$-modules is an
$R$-homomorphism  $\varphi : H \to H'$ for which the diagram
$$
\begin{CD}
R_f \otimes _R H @>{\alpha}>> H \\
@V{1\otimes \varphi}VV         @V{\varphi}VV \\
R_f \otimes _R H' @>{\alpha '}>> H' \\
\end{CD}
$$
commutes.
\end{disc}

\begin{disc}
\label{rtf} Similarly, extension of the $R$-module structure on an
$R$-module $M$ to a structure of right $R[x,f]$-module is equivalent
to the provision of an Abelian group homomorphism $\xi : M \lra M$
for which $\xi(mr^p) = \xi(m)r$ for all $r \in R$ and $m \in M$. The
map $\xi$ and the action of $x$ are related by the formula $\xi(m) =
mx$ for all $m \in M$.

There is a bijective correspondence between
$\Hom_{rR}(M\otimes_RR_f,M)$ and
$$
\left\{ \xi \in \Endo_{\Z}(M)  \ | \  \xi(mr^p) = \xi(m)r\text{~for
all~}r \in R\text{~and~}m \in M\right\}
$$
under which $\beta\in \Hom_{rR}(M\otimes_RR_f,M)$ corresponds to $m
\mapsto \beta(m \otimes 1)$. In view of this, we are going to use
the notation $(M, \beta)$  to describe a right $R[x, f]$-module
$\M$, where  $M$  is the underlying  $R$-module and $\beta \in
\Hom_{rR}(M\otimes_RR_f,M)$ is such that $mx = \beta(m\otimes 1)$
for all $m \in M$.

Under the adjoint isomorphism of Remark \ref{bimod}(v), a $\beta \in
\Hom_{rR}(M\otimes_RR_f,M)$ corresponds to a $\widetilde{\beta} \in
\Hom _{rR} (M , \Hom _{rR}(R_f, M))$. Note that $mx =
(\widetilde{\beta}(m))(1)$ for all $m \in M$. We write $\M = (M,
\beta) = [M, \widetilde{\beta}]$.
\end{disc}

\begin{ntn}
\label{nt.1} We shall use ${}_{R[x,f]}\Mod$ to denote the category
of all left $R[x,f]$-modules and left $R[x,f]$-homomorphisms between
them, and $\Mod_{R[x,f]}$ to denote the category of all right
$R[x,f]$-modules and right $R[x,f]$-homomorphisms between them.
\end{ntn}

\begin{exs}
(i) The Frobenius endomorphism  $f : R \to R$  induces a left
$R$-module homomorphism  $\alpha : R _f \otimes_R R \to R$  for
which $\alpha (a \otimes b) = a f(b) = ab^p$ for all $a \in R_f$ and
$b \in R$. This therefore yields the left $R[x, f]$-module  $(R,
\alpha)$, in which we have $x r = r^p$ for all $r \in R$.

Let  $c \in R$  be any element. Then there is a left $R$-module
homomorphism $\alpha_c : R_f \otimes R \to R$  such that  $\alpha
_c(a \otimes b) = cab^p$  for  $a  \in R_f$ and $b \in R$. Thus we
obtain a left $R[x,f]$-module  $(R, \alpha _c)$, in which $x  r =
cr^p$ for all $r \in R$. It is straightforward to check that $(R,
\alpha) \cong (R, \alpha _c)$ as left $R[x,f]$-modules if and only
if  $c$ is a unit in $R$ possessing a $(p-1)$th root in $R$. Thus it
is possible for there to be many left $R[x,f]$-modules with the same
underlying $R$-module.

(ii) Suppose that our ring $R$ is reduced and that we are given a
non-trivial $R^p$-homomorphism $\pi : R \to R^p$. (In the case where
$R$ is $F$-finite and $F$-pure, we can find such a $\pi$ that is a
surjective mapping, because $R^p$ is a direct summand of $R$ as an
$R^p$-module: see \cite[Corollary 5.3]{HocRob76}.) In this
situation, we have a right $R$-module homomorphism $\beta : R
\otimes_R R_f \to R$ for which $\beta (a \otimes b) = \pi(ab)^{1/p}$
for all $a\in R$ and $b \in R_f$. This yields a right $R[x,
f]$-module  $(R, \beta)$, in which we have $r x = \pi (r)^{1/p}$ for
all $r \in R$.
\end{exs}

We have shown in \ref{main lemma} that whenever $R$ is $F$-finite
and $I$ is an injective $R$-module, there is an $(R,R)$-bimodule
isomorphism $\Psi:  {}_f I \to \Hom _{rR} (R_f, I)$; of course,
$\Psi$ is, in particular, a right  $R$-module homomorphism.
Therefore we have the following as a corollary to \ref{main lemma}.

\begin{cor}
\label{mainlemcor} Suppose that $R$ is $F$-finite, and let $I$ be an
injective $R$-module. Then there is a right $R[x,f]$-module  ${\bf
I} = [I, \Psi]$ which has $I$  as underlying $R$-module, and is such
that $\Psi : {}_f I \to \Hom _{rR} (R_f, I)$  is an $(R,R)$-bimodule
isomorphism. Note that $zx = (\Psi(z))(1)$ for all $z \in I$.
\end{cor}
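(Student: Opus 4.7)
The plan is to assemble Lemma \ref{main lemma} with the machinery for describing right $R[x,f]$-modules set up in Discussion \ref{rtf}, treating the statement essentially as a bookkeeping corollary.

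First, I would apply Lemma \ref{main lemma} to the injective $R$-module $I$ to obtain an $(R,R)$-bimodule isomorphism $\Psi : {}_f I \stackrel{\cong}{\lra} \Hom_{rR}(R_f, I)$. The key observation is that, by the very definition of the bimodule ${}_f I$ given in the paragraph preceding Remark \ref{rem}, the right $R$-action on ${}_f I$ coincides with the original right $R$-action on $I$; only the left action has been twisted by $f$. Consequently $\Psi$ may (and will) be regarded simply as a right $R$-module homomorphism $\Psi : I \to \Hom_{rR}(R_f, I)$, where the target carries the right $R$-module structure of Remark \ref{rem}(iv).

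Next, I would invoke Discussion \ref{rtf}: specifying a right $R[x,f]$-module structure on $I$ that extends the given $R$-module structure amounts, via the adjoint isomorphism of Remark \ref{bimod}(v), to choosing an element $\widetilde{\beta} \in \Hom_{rR}(I, \Hom_{rR}(R_f, I))$ (equivalently, a $\beta \in \Hom_{rR}(I \otimes_R R_f, I)$). Taking $\widetilde{\beta} = \Psi$ therefore yields the right $R[x,f]$-module ${\bf I} = [I, \Psi]$, whose $x$-action is given by $zx = (\widetilde{\beta}(z))(1) = (\Psi(z))(1)$ for every $z \in I$, as required.

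There is no real obstacle: the substantive content is entirely in Lemma \ref{main lemma}, and the mild subtlety — that the map produced there as an $(R,R)$-bimodule isomorphism is in particular a right $R$-homomorphism of the precise type needed by Discussion \ref{rtf} — is immediate from the definition of ${}_f I$. The corollary is thus essentially a reformulation of Lemma \ref{main lemma} in the language of right $R[x,f]$-modules.
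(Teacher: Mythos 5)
Your proposal is correct and follows essentially the same route the paper takes: the paper itself introduces Corollary \ref{mainlemcor} with a one-sentence remark that $\Psi$, being an $(R,R)$-bimodule isomorphism, is in particular a right $R$-homomorphism from $I$ (using that the right action on ${}_fI$ agrees with that on $I$) and then invokes the correspondence of Discussion \ref{rtf}. You have simply spelled out the same bookkeeping a bit more explicitly.
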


\begin{lem}
\label{ann} Let the situation and notation be as in Corollary\/ {\rm
\ref{mainlemcor}}, and consider the right $R[x,f]$-module  ${\bf I}
= [I, \Psi]$. Then ${\bf I}$ has the following property: if $z \in
{\bf I}$ is such that, for a fixed $n \in \nn$, we have $zrx^n = 0$
for all $r \in R$, then $z = 0$.
\end{lem}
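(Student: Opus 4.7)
The plan is to induct on $n$, using the base case $n = 0$ (for which the hypothesis $zr = 0$ for all $r \in R$, specialised to $r = 1$, immediately gives $z = 0$). The crux of the inductive step is to reduce the condition ``$zrx^n = 0$ for all $r$'' to the same condition with $n$ replaced by $n-1$, but for a new element $w_r \in I$ built from $z$.

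First I would unpack how $\Psi$ governs the right action of $x$. Since $\Psi$ is an $(R,R)$-bimodule isomorphism, it is in particular right $R$-linear, so for any $y \in I$ and $r \in R$ we have $\Psi(yr) = \Psi(y)\cdot r$, and using the right action on $\Hom_{rR}(R_f,I)$ from Remark \ref{rem}(iv) (namely $(\psi r)(a) = \psi(ra)$) and evaluating at $a = 1 \in R_f$ yields
\[
(yr)x \;=\; \Psi(yr)(1) \;=\; (\Psi(y)\cdot r)(1) \;=\; \Psi(y)(r).
\]
So the identification $\Psi(z)(r) = (zr)x$ holds for every $r \in R$.

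Now for the inductive step, fix $n \ge 1$ and $r \in R$, and set $w_r := (zr)x = \Psi(z)(r)$. For any $t \in R$, using associativity and the defining relation $xt = t^p x$ of the Frobenius skew polynomial ring,
\[
w_r t x^{n-1} \;=\; (zr)(xt)x^{n-1} \;=\; (zr)(t^p x)x^{n-1} \;=\; (zrt^p)x^n,
\]
which vanishes by hypothesis applied to $r' := rt^p$. Thus $w_r\, t\, x^{n-1} = 0$ for every $t \in R$, so the inductive hypothesis (at index $n-1$) forces $w_r = 0$. Since $r$ was arbitrary, $\Psi(z)(r) = 0$ for all $r \in R_f$, hence $\Psi(z) = 0$, and finally $z = 0$ by the injectivity of $\Psi$.

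There is no real obstacle here beyond careful bookkeeping: one must keep straight the twisted $(R,R)$-bimodule structures on $R_f$ and on $\Hom_{rR}(R_f,I)$ so as to recognize that right $R$-linearity of $\Psi$ yields the key identity $\Psi(z)(r) = (zr)x$, after which the commutation rule $xt = t^p x$ does all the work.
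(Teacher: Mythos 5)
Your proof is correct and essentially the same as the paper's: both proceed by induction on $n$, both exploit the twisted multiplication $xt = t^p x$ to reduce to the previous index, and both hinge on the identity $(zr)x = \Psi(z)(r)$ (which the paper isolates as its separate $n=1$ case, while you apply it once at the end of the inductive step). The only difference is a bookkeeping choice of whether to peel off the leftmost or the rightmost factor of $x$, which does not change the substance of the argument.
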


\begin{proof} The claim is clear when $n = 0$, and we deal now with
the case where $n=1$. We have $$0 = (zr)x = \left(\Psi(zr)\right)(1)
= \left((\Psi(z))r\right)(1) = (\Psi(z))(r) \quad \text{for all $r
\in R$}.$$ Therefore $\Psi(z) = 0$, so that $z = 0$ because $\Psi$
is an isomorphism.

Now suppose, inductively, that $n \in \N$ with $n >1$, and that the
claim has been proved for all smaller values of $n$. Suppose that
$zrx^n = 0$ for all $r \in R$. Then $(zrx^{n-1})sx =
zrs^{p^{n-1}}x^n = 0$ for all $r,s \in R$. It follows from the case
where $n = 1$ that $zrx^{n-1} = 0$ for all $r \in R$; it then
follows from the inductive hypothesis that $z = 0$.
\end{proof}

\begin{disc}\label{constrfunct}
Throughout the rest of this section, assume that our ring $R$ is
$F$-finite, and let $I$ be an injective $R$-module. We fix a right
$R[x, f]$-module structure on  $I$ as in Corollary {\rm
\ref{mainlemcor}}, so that ${\bf I} = [I, \Psi]$ is a right
$R[x,f]$-module with $\Psi : {}_f I \to \Hom _{rR} (R_f, I)$ an
$(R,R)$-bimodule isomorphism. We denote by $(-)^{\vee}$ the duality
functor determined by  $I$, so that $X ^{\vee} = \Hom _R (X, I)$ for
each $R$-module  $X$.

Now suppose we are given a left $R[x,f]$-module  $\H =  (H, \alpha)$
with $\alpha \in \Hom _{lR}(R_f \otimes _R H, H)$.

\begin{enumerate}\item
Here we produce a right $R[x,f]$-module structure on $H^{\vee}$.

First apply the functor $(-)^{\vee}$ to the left $R$-homomorphism
$\alpha :R_{f}\otimes _{R} H \to H$: the result is a right
$R$-homomorphism $ \alpha ^{\vee}  :   H^{\vee}  \to \Hom_{lR}(R_{f}
\otimes_{R} H, I). $ But there is an $(R,R)$-bimodule isomorphism
$\Hom_{lR}(R_{f} \otimes_{R} H, I) \stackrel{\cong}{\lra}
\Hom_{lR}(H, \Hom_{lR}(R_{f}, I))$ given by Remark \ref{bimod}(iv),
and use of the $(R,R)$-bimodule isomorphism $\Psi$ produces a
further $(R,R)$-bimodule isomorphism $$\Hom_{lR}(H, \Hom_{lR}(R_{f},
I)) \stackrel{\cong}{\lra} \Hom_{lR}(H, \Hom_{rR}(R_{f}, I)).$$ In
addition, Remark \ref{bimod}(vi) provides an $(R,R)$-bimodule
isomorphism
$$\Hom_{lR}(H, \Hom_{rR}(R_{f}, I)) \stackrel{\cong}{\lra} \Hom_{rR}(R_{f},
\Hom_{lR}(H, I)).$$ Composition of these therefore yields a right
$R$-homomorphism $\gamma: H^{\vee} \lra \Hom_{rR}(R_{f}, H^{\vee})$,
and we shall denote by $D(\alpha)$ the right $R$-homomorphism
$H^{\vee} \otimes _R R_f \to  H^{\vee}$ that corresponds to $\gamma$
under the adjoint isomorphism of Remark \ref{bimod}(v). (Note that
$H^{\vee} = \Hom _{lR} (H, I) = \Hom _{rR} (H, I)$.)

Thus $D(\alpha)$ makes $H^{\vee}$ into a right $R[x,f]$-module. We
define $$\D (\H) = \D (H, \alpha) := (H ^{\vee}, D(\alpha)) = [H
^{\vee},\gamma].$$ It is straightforward to use the above definition
of $\gamma$ to check that
\begin{equation}\label{D}
(D(\alpha)(m \otimes r))(h) = \left(\Psi(m(\alpha(1\otimes
h)))\right)(r) \quad\mbox{for all~}m \in H^{\vee},\ r \in R_f
\mbox{~and~} h \in H.
\end{equation}

\item Now let
$\H' = (H', \alpha ')$ be a second left $R[x,f]$-module and let
$\varphi: \H \to \H'$ be a left  $R[x,f]$-homomorphism. Thus
$\varphi$ is an $R$-homomorphism $H \to H'$  which makes the diagram
$$
\begin{CD}
R_{f}\otimes_{R} H @>{\alpha}>> H\\
@V{\Id\otimes \varphi}VV @VV{\varphi}V \\
R_{f}\otimes_{R} H' @>{\alpha '}>> H'
\end{CD}
$$
commute. It is straightforward to check that the diagram
$$
\begin{CD}
H^{\vee} @>{\alpha^{\vee}}>> \Hom_{lR}( R_{f}\otimes_{R} H, I)
@>\cong>>  \Hom_{rR}(R_{f}, \Hom_{l R}(H, I))\\
@A{\varphi^{\vee}}AA @AA{(\Id\otimes \varphi)^{\vee}}A @A{\Hom (R_f, \varphi^{\vee})}AA\\
H'^{\vee}@>{\alpha '^{\vee}}>> \Hom_{lR}(R_{f}\otimes_{R} H', I)
@>\cong>>  \Hom_{rR}(R_{f}, \Hom_{l R}(H',I))~,
\end{CD}
$$
in which the upper horizontal isomorphism is the one used in the
construction in part (i) and the lower horizontal isomorphism is the
corresponding one for $\H'$, commutes. Therefore $\varphi^{\vee} :
H'^{\vee} \to H^{\vee}$ defines a right $R[x,f]$-homomorphism $\D(\H
') \to \D (\H)$, which we denote by $\D (\varphi)$.
\end{enumerate}
\end{disc}

\begin{prop}\label{lt-rt} Let the situation and notation be as in
Discussion\/ {\rm \ref{constrfunct}}. There is a contravariant
functor $\D : {}_{R[x,f]}{\mathrm{Mod}} \to {\mathrm{Mod}}_{R[x,f]}$
which maps a left  $R[x,f]$-module  $(H, \alpha)$  to $(H^{\vee},
D(\alpha))$ where  $D(\alpha)$  is given by  {\rm (\ref{D})} in
Discussion\/ {\rm \ref{constrfunct}(i)}.
\end{prop}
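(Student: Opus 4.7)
The construction of $\D$ on objects and morphisms has essentially been carried out in Discussion \ref{constrfunct}; what the proposition requires is that we verify (a) that the right $R$-homomorphism $D(\alpha): H^\vee \otimes_R R_f \to H^\vee$ genuinely endows $H^\vee$ with the structure of a right $R[x,f]$-module, (b) that if $\varphi : \H \to \H'$ is a left $R[x,f]$-homomorphism then $\D(\varphi) = \varphi^\vee : H'^\vee \to H^\vee$ is a right $R[x,f]$-homomorphism, and (c) the identity and composition axioms for a contravariant functor.

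For (a), I would observe that the passage from $\alpha$ to $\gamma$ in \ref{constrfunct}(i) was performed by composing $\alpha^\vee$ with $(R,R)$-bimodule isomorphisms supplied by Remark \ref{bimod}(iv), Corollary \ref{mainlemcor} (via $\Psi$), and Remark \ref{bimod}(vi). Each of these is a morphism of $(R,R)$-bimodules, so $\gamma$ is in particular a right $R$-homomorphism $H^\vee \to \Hom_{rR}(R_f, H^\vee)$, and hence by the correspondence recorded in Discussion \ref{rtf} it defines a right $R[x,f]$-structure $[H^\vee,\gamma] = (H^\vee, D(\alpha))$ on $H^\vee$. The explicit formula (\ref{D}) is then obtained by unwinding the three isomorphisms used in the construction of $\gamma$, together with the adjoint-isomorphism description in Remark \ref{bimod}(v).

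For (b), the key point is the commutative diagram displayed in \ref{constrfunct}(ii). Commutativity of its left square is just the statement that applying $(-)^\vee$ to the defining square of a left $R[x,f]$-homomorphism yields a commutative square of right $R$-homomorphisms. Commutativity of the right square reduces to the naturality of the isomorphism in Remark \ref{bimod}(iv) (applied with respect to the morphism $\Id \otimes \varphi$) together with the naturality of the identifications supplied by $\Psi$ and by Remark \ref{bimod}(vi). Chasing the whole rectangle yields that $\gamma \circ \varphi^\vee = \Hom_{rR}(R_f, \varphi^\vee) \circ \gamma'$, and passing back through the adjoint isomorphism of Remark \ref{bimod}(v) translates this into the commutative square
$$
\begin{CD}
H'^\vee \otimes_R R_f  @>{D(\alpha')}>> H'^\vee \\
@V{\varphi^\vee \otimes \Id}VV     @VV{\varphi^\vee}V \\
H^\vee \otimes_R R_f @>{D(\alpha)}>>  H^\vee
\end{CD}
$$
so that $\D(\varphi) = \varphi^\vee$ is a right $R[x,f]$-homomorphism from $\D(\H')$ to $\D(\H)$.

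Finally, (c) follows at once: the identity and composition axioms for $\D$ are inherited from the corresponding axioms for the contravariant functor $(-)^\vee = \Hom_R(-,I)$, since $\D$ acts on morphisms simply by $\D(\varphi) = \varphi^\vee$. The main obstacle in the whole argument is the bookkeeping of (b); once one commits to verifying commutativity on elements using (\ref{D}), every step is a routine unwinding, so I would rely on the naturality of the three bimodule isomorphisms used in (i) rather than attempt a direct element-wise calculation.
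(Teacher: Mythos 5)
Your proposal is correct and follows essentially the same approach as the paper, which leaves the content of Proposition \ref{lt-rt} implicit in Discussion \ref{constrfunct}: part (i) of that discussion establishes that $D(\alpha)$ equips $H^\vee$ with a right $R[x,f]$-module structure (your point (a)), and part (ii) records, via the displayed commutative diagram, that $\varphi^\vee$ is a right $R[x,f]$-homomorphism (your point (b)); your point (c) is the routine inheritance of functoriality from $(-)^\vee$. Your observation that the commutativity of the right-hand square in the diagram of \ref{constrfunct}(ii) reduces to naturality of the adjoint isomorphism of Remark \ref{bimod}(iv), of the identification induced by $\Psi$, and of the isomorphism of Remark \ref{bimod}(vi) is exactly the content hidden behind the paper's ``straightforward to check.''
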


\begin{prop}\label{action} Let the situation and notation be as in
Discussion\/ {\rm \ref{constrfunct}} and let $\H = (H, \alpha)$  be
a left $R[x,f]$-module. The right $R[x,f]$-module structures on
${\bf I}$ and $\D(\H) = (\Hom_{lR}(H,I),D(\alpha))$ are such that
\begin{equation}\label{right-act}
(m  x)(h) = (m (x h)) x \quad\text{for all $m \in H^{\vee} =
\Hom_{lR}(H,I)$ and $h \in H$}.
\end{equation}
\end{prop}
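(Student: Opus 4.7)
The plan is to prove Proposition \ref{action} by a direct computation that simply unpacks the relevant definitions established in Corollary \ref{mainlemcor}, Discussion \ref{ltf}, and equation (\ref{D}).

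First I would recall the right action on $\D(\H)$. By construction, $mx = D(\alpha)(m\otimes 1)$ for $m \in H^{\vee}$. Thus, applying the explicit formula (\ref{D}) with $r = 1$, I obtain
$$
(mx)(h) \;=\; \bigl(D(\alpha)(m\otimes 1)\bigr)(h) \;=\; \bigl(\Psi(m(\alpha(1\otimes h)))\bigr)(1)
$$
for every $h \in H$. Next, from Discussion \ref{ltf}, the left action of $x$ on $H$ is given by $xh = \alpha(1\otimes h)$, so that $m(\alpha(1\otimes h)) = m(xh)$, an element of $I$. Now I invoke Corollary \ref{mainlemcor}: the right action of $x$ on $\mathbf{I}$ is defined by $zx = (\Psi(z))(1)$ for all $z \in I$. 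Specializing to $z = m(xh) \in I$ yields $(m(xh))x = (\Psi(m(xh)))(1)$. Combining these two observations gives
$$
(mx)(h) \;=\; \bigl(\Psi(m(xh))\bigr)(1) \;=\; (m(xh))x,
$$
which is the desired identity.

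There is no substantive obstacle; the entire content lies in tracking the correspondence between $\alpha$, $D(\alpha)$, and $\Psi$ through the adjoint and bimodule isomorphisms used to build $\D$. The only care needed is to ensure consistency between the left action of $x$ on $H$ (via $\alpha$), the right action of $x$ on $I$ (via $\Psi$), and the right action of $x$ on $H^{\vee}$ (via $D(\alpha)$); formula (\ref{D}) packages exactly this compatibility, so the proposition follows in a couple of lines.
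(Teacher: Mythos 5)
Your proof is correct and is essentially the same as the paper's: both apply formula (\ref{D}) with $r = 1$ to compute $(mx)(h)$, then rewrite $\bigl(\Psi(m(\alpha(1\otimes h)))\bigr)(1)$ as $(m(xh))x$ using the definitions $xh = \alpha(1\otimes h)$ and $zx = (\Psi(z))(1)$. The only difference is that you spell out each substitution in separate sentences rather than in one displayed chain of equalities.
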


\begin{proof} Recall that the right $R[x,f]$-module structure on $I$
is given by $\Psi : {}_f I \to \Hom _{rR}(R_f, I)$, so that $zx =
(\Psi(z))(1)$ for all $z \in I$.

Let $m \in H^{\vee} = \Hom_{lR}(H,I)$ and $h \in H$. By (1) in
Discussion \ref{constrfunct}(i), we have
$$
(mx)(h) = (D(\alpha)(m\otimes 1))(h) = \Psi(m(\alpha(1\otimes
h)))(1) = (m(\alpha(1\otimes h)))x = (m (x h)) x.
$$
\end{proof}

We now provide the right $R[x,f]$-module analogue of Discussion
\ref{constrfunct}.

\begin{disc}\label{otherfunct} The hypotheses and notation are as in
Discussion \ref{constrfunct}. Let  $\M = (M, \beta)$  be a right
$R[x, f]$-module, where $\beta \in \Hom _{rR} (M \otimes _R R_f,
M)$.

\begin{enumerate}\item
Here we produce a left $R[x,f]$-module structure on $M^{\vee}$.

First apply the functor $(-)^{\vee}$ to the right $R$-homomorphism
$\beta :M \otimes _R R_f \to M$: the result is a left
$R$-homomorphism $ \beta ^{\vee}  :   M^{\vee}  \to \Hom_{rR}(M
\otimes _R R_f, I). $ But there is an $(R,R)$-bimodule isomorphism
$\Hom_{rR}(M \otimes _R R_f, I)) \stackrel{\cong}{\lra} \Hom_{rR}(M,
\Hom_{rR}(R_{f}, I))$ given by Remark \ref{bimod}(v), and use of the
$(R,R)$-bimodule isomorphism $\Psi^{-1}$ produces a further
$(R,R)$-bimodule isomorphism
$$\Hom_{rR}(M, \Hom_{rR}(R_{f},
I)) \stackrel{\cong}{\lra} \Hom_{rR}(M, \Hom_{lR}(R_{f}, I)).$$ In
addition, Remark \ref{bimod}(vi) provides an $(R,R)$-bimodule
isomorphism
$$\Hom_{rR}(M, \Hom_{lR}(R_{f}, I))  \stackrel{\cong}{\lra}
\Hom_{lR}(R_{f}, \Hom_{rR}(M, I)).$$ Composition of these therefore
yields a left $R$-homomorphism $\delta: M^{\vee} \lra
\Hom_{lR}(R_{f}, M^{\vee})$, and we shall denote by $D'(\beta)$ the
left $R$-homomorphism $R_f \otimes _RM^{\vee} \to M^{\vee}$ that
corresponds to $\delta$ under the adjoint isomorphism of Remark
\ref{bimod}(iv). (Note that $M^{\vee} = \Hom _{lR} (M, I) = \Hom
_{rR} (M, I)$.) Thus $D'(\beta)$ makes $M^{\vee}$ into a left
$R[x,f]$-module. We define $$\D' (\M) = \D (M, \beta) := (M ^{\vee},
D'(\beta)) = [M ^{\vee},\delta].$$ It is straightforward to use the
above definition of $\delta$ to check that
\begin{equation}\label{D'}
(D'(\beta)(r \otimes h))(m) = \left(\Psi^{-1}(r' \mapsto h(\beta(m
\otimes r')))\right)r \quad\mbox{for all~}h \in M^{\vee},\ r \in R_f
\mbox{~and~} m \in M.
\end{equation}

\item Now let
$\M' = (M', \beta ')$ be a second right $R[x,f]$-module and let
$\psi: \M \to \M'$ be a right  $R[x,f]$-homomorphism. An argument
similar to that in Discussion \ref{constrfunct}(ii) shows that
$\psi^{\vee} : M'^{\vee} \to M^{\vee}$ defines a left
$R[x,f]$-homomorphism $\D'(\M ') \to \D '(\M)$, which we denote by
$\D '(\psi)$.
\end{enumerate}
\end{disc}

\begin{prop}\label{rt-lt} Let the situation and notation be as in
Discussion\/ {\rm \ref{otherfunct}}. There is a contravariant
functor $\D ': {\mathrm{Mod}}_{R[x,f]} \to
{}_{R[x,f]}{\mathrm{Mod}}$ which maps a right  $R[x,f]$-module  $(M,
\beta)$  to $(M^{\vee}, D' (\beta))$ where  $D' (\beta)$  is given
by {\rm (\ref{D'})} in Discussion\/ {\rm \ref{otherfunct}(i)}.
\end{prop}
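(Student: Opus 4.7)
The plan is to verify the four functor axioms one at a time, leaning heavily on the construction already carried out in Discussion \ref{otherfunct}. First, for a right $R[x,f]$-module $\M=(M,\beta)$, the recipe of Discussion \ref{otherfunct}(i) produces a left $R$-homomorphism $D'(\beta):R_f\otimes_RM^{\vee}\to M^{\vee}$ (equivalently, the left $R$-homomorphism $\delta:M^{\vee}\to\Hom_{lR}(R_f,M^{\vee})$ obtained as the composite of the three displayed $(R,R)$-bimodule isomorphisms). By Discussion \ref{ltf}, giving such a datum is equivalent to endowing $M^{\vee}$ with the structure of a left $R[x,f]$-module extending its $R$-module structure. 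Hence $(M^{\vee},D'(\beta))$ is a bona fide object of ${}_{R[x,f]}\mathrm{Mod}$.

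Next I would verify that if $\psi:\M\to\M'$ is a right $R[x,f]$-homomorphism, then $\D'(\psi)=\psi^{\vee}:M'^{\vee}\to M^{\vee}$ is a left $R[x,f]$-homomorphism. Rather than chasing the analogue of the big diagram of Discussion \ref{constrfunct}(ii), the cleanest route is to apply the explicit formula (\ref{D'}). One computes, for $r\in R_f$, $h'\in M'^{\vee}$ and $m\in M$,
\begin{align*}
\bigl(\psi^{\vee}(D'(\beta')(r\otimes h'))\bigr)(m) &= \bigl(D'(\beta')(r\otimes h')\bigr)(\psi(m)) \\
&= \bigl(\Psi^{-1}(r'\mapsto h'(\beta'(\psi(m)\otimes r')))\bigr)r,
\end{align*}
and compares this with $\bigl(D'(\beta)(r\otimes\psi^{\vee}(h'))\bigr)(m)=\bigl(\Psi^{-1}(r'\mapsto h'(\psi(\beta(m\otimes r'))))\bigr)r$. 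The two agree because $\psi$, being right $R[x,f]$-linear, satisfies $\beta'(\psi(m)\otimes r')=\psi(\beta(m\otimes r'))$ for every $r'\in R_f$.

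Finally, the identity and composition axioms $\D'(\mathrm{id}_{\M})=\mathrm{id}_{\D'(\M)}$ and $\D'(\psi\circ\varphi)=\D'(\varphi)\circ\D'(\psi)$ reduce to the corresponding properties of the duality $\Hom_R(-,I)$ on the underlying $R$-modules, since on morphisms $\D'$ acts simply by applying $(-)^{\vee}$. The one step that could superficially look like an obstacle, namely seeing that the assignment $\beta\mapsto D'(\beta)$ is natural in $\M$, is really just the naturality in $\M$ of each ingredient used in its construction: the adjoint isomorphisms of Remark \ref{bimod}(iv),(v), the bimodule isomorphism of Remark \ref{bimod}(vi), and post-composition with the fixed $(R,R)$-bimodule isomorphism $\Psi^{-1}$. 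Since each of these is natural in its first argument, the composite is too, and the commutative square witnessing left $R[x,f]$-linearity of $\psi^{\vee}$ arises automatically from the one in the middle paragraph.
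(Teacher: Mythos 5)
Your proof is correct, and in substance it follows the same plan the paper uses: the object map is exactly the construction of Discussion \ref{otherfunct}(i) (which, by Discussion \ref{ltf}, produces a genuine left $R[x,f]$-module), and the morphism map is $\psi\mapsto\psi^{\vee}$, with functoriality inherited from $\Hom_R(-,I)$. The one place you deviate is in verifying that $\psi^{\vee}$ is a left $R[x,f]$-homomorphism: the paper refers back to the commutative diagram of adjoint isomorphisms spelled out in Discussion \ref{constrfunct}(ii) (it says ``an argument similar to that in Discussion \ref{constrfunct}(ii)''), whereas you do a direct element-level computation with the explicit formula (\ref{D'}), reducing the claim to the identity $\beta'(\psi(m)\otimes r')=\psi(\beta(m\otimes r'))$, which is precisely the commutativity of the square defining a right $R[x,f]$-homomorphism. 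That is a legitimate and, if anything, more transparent route: the diagram chase in the paper is cleaner notationally, while your computation makes the mechanism visible and avoids having to assert ``straightforward to check'' about a three-column diagram. Both are fine; no gaps.
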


\begin{prop}\label{left-action}
Let the situation and notation be as in Discussion\/ {\rm
\ref{otherfunct}}. Let $\M = (M, \beta)$  be a right
$R[x,f]$-module, so that $\D' (\M) = (M^{\vee}, D' (\beta))$  is a
left $R[x,f]$-module by Proposition\/ {\rm \ref{rt-lt}}. The left
action of $x$ on $M^{\vee}$ can be described as follows: for $h \in
M^{\vee}$, the result $xh$ of multiplying $h$ on the left by $x$ is
the unique $h' \in M^{\vee}$ for which
\begin{equation}\label{left-act}
(h'(m))rx = h (mrx) \quad\text{for all $m \in M$ and $r\in R$}.
\end{equation}
\end{prop}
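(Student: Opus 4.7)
The strategy is to read off the value of $(xh)(m)$ from formula (\ref{D'}) and then reinterpret it using the defining identity $zx = (\Psi(z))(1)$ from Corollary \ref{mainlemcor}.

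First I would observe that, by the balanced tensor identity $m \otimes r' = mr' \otimes 1$ in $M \otimes_R R_f$ together with the defining identity $nx = \beta(n \otimes 1)$ of the right $R[x,f]$-action on $\M$, one has $\beta(m \otimes r') = (mr')x = mr'x$ for all $m \in M$ and $r' \in R$. Applying formula (\ref{D'}) with $r = 1$ therefore gives
$$(xh)(m) = \Psi^{-1}(\phi_m), \quad \text{where } \phi_m(r') = h(mr'x) \text{ for } r' \in R_f,$$
for every $m \in M$. In particular, $\Psi((xh)(m))(r) = h(mrx)$ for every $r \in R$.

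Next I would use the bimodule compatibility of $\Psi$ together with Corollary \ref{mainlemcor} to rewrite the left-hand side of (\ref{left-act}). By Remark \ref{rem}(iv) the right $R$-action on $\Hom_{rR}(R_f, I)$ is $(\psi \cdot r)(a) = \psi(ra)$, while the right $R$-action on ${}_fI$ coincides with the underlying $R$-action on $I$. Right $R$-linearity of $\Psi$ then gives $\Psi(z \cdot r)(a) = \Psi(z)(ra)$ for $z \in I$, $r \in R$ and $a \in R_f$, and Corollary \ref{mainlemcor} yields, for $h' := xh$,
$$(h'(m))rx = (h'(m) \cdot r)x = \Psi(h'(m) \cdot r)(1) = \Psi(h'(m))(r) = h(mrx).$$
This verifies that $h' = xh$ satisfies (\ref{left-act}).

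Finally, uniqueness follows immediately from Lemma \ref{ann}: if $k \in M^{\vee}$ satisfies $(k(m))rx = 0$ for all $m \in M$ and all $r \in R$, then for each fixed $m$ the element $k(m) \in {\bf I}$ is killed by $rx$ for every $r \in R$, so the case $n = 1$ of Lemma \ref{ann} forces $k(m) = 0$, whence $k = 0$. The main obstacle I anticipate is the careful tracking of the various bimodule actions on $I$, ${}_fI$, $\Hom_{lR}(R_f, I)$ and $\Hom_{rR}(R_f, I)$ that have been silently identified via $\Psi$ in formula (\ref{D'}); once these are unpacked, the verification is almost automatic.
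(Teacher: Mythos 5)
Your proof is correct and takes essentially the same approach as the paper's: both unfold formula (\ref{D'}) to identify $(xh)(m)$, use the defining identity $zx = (\Psi(z))(1)$ together with the right $R$-linearity of the bimodule isomorphism $\Psi$ to match the two sides of (\ref{left-act}), and invoke the $n=1$ case of Lemma \ref{ann} for uniqueness. The only (cosmetic) difference is that you first extract the intermediate identity $\Psi((xh)(m))(r) = h(mrx)$ before converting, whereas the paper computes $((xh)(m))rx$ in a single chain of equalities.
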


\begin{proof} First of all,
\begin{align*}
((xh)(m))rx & = \left((D'(\beta)(1 \otimes h))(m)\right)rx =
\left(\Psi^{-1}(r' \mapsto h(\beta(m \otimes r')))\right)rx\\ & =
\left(\Psi\left(\left(\Psi^{-1}(r' \mapsto h(\beta(m \otimes
r')))\right)r\right)\right)(1) = \left((r' \mapsto h(\beta(m \otimes
r')))r\right)(1) = h(\beta(m \otimes r))\\ & = h(\beta(m r\otimes
1)) = h(mrx).
\end{align*}
It therefore remains for us to show that if $h' \in M^{\vee}$ is
such that $(h'(m))rx = h (mrx)$ for all $m \in M$ and $r\in R$, then
$h' = xh$. It is therefore enough for us to show that if $h'' \in
M^{\vee}$ is such that $(h''(m))rx = 0$ for all $m \in M$ and $r\in
R$, then $h'' = 0$. However, this is easy, because Lemma \ref{ann}
shows that $h''(m) = 0$ for all $m \in M$.
\end{proof}

Propositions \ref{lt-rt} and \ref{rt-lt} prepare the ground for
several subsequent results in this paper.

\begin{prop}\label{natural transformation}
Let the situation and notation be as in Propositions\/ {\rm
\ref{lt-rt}} and\/ {\rm \ref{rt-lt}}, so that $R$  is $F$-finite and
$I$ is an injective $R$-module  with fixed $(R, R)$-bimodule
isomorphism $\Psi : {}_f I \to \Hom _{rR}(R_f, I)$.

For each $R$-module $G$, we write $G^{\vee} = \Hom _R (G, I)$ as
before. Let $\omega_G : G \lra (G^{\vee})^{\vee}$ be the natural
`evaluation' $R$-homomorphism for which $\omega_G(g)(h) = h(g)$ for
all $h \in G^{\vee}$ and $g \in G$. Recall that, as $G$ varies
through the category ${}_R\Mod$ of all $R$-modules and
$R$-homomorphisms, the $\omega_G$ constitute a natural
transformation from the identity functor on ${}_R \Mod$ to the
functor $((-)^\vee)^\vee$.

\begin{enumerate}
\item
If  $\H =(H, \alpha)$ is a left $R[x,f]$-module, then $\omega_H$ is
a left $R[x,f]$-module homomorphism from  $\H$  to  $\D'(\D(\H))$.
As $\H$ varies through ${}_{R[x,f]}\Mod$, the $\omega_H$ constitute
a natural transformation from the identity functor on that category
to the functor $\D' \circ \D$.

\item
If  $\M = (M, \beta)$ is a right $R[x,f]$-module, then $\omega_M$ is
a right $R[x,f]$-module homomorphism from $\M$ to $\D(\D'(\M))$. As
$\M$ varies through $\Mod_{R[x,f]}$, the $\omega_M$ constitute a
natural transformation from the identity functor on that category to
the functor $\D \circ \D'$.
\end{enumerate}
\end{prop}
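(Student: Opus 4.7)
The plan is to verify the two claims by using the explicit characterizations of the $R[x,f]$-module structures on $\D(\H)$ and $\D'(\M)$ supplied by Propositions \ref{action} and \ref{left-action}, respectively. The key point is that these two propositions describe $x$-actions in a way that translates questions about $\omega_H$ and $\omega_M$ being $x$-equivariant into straightforward identities in $I$ that hold by the commutativity of $R$ and the bimodule compatibility of $\Psi$.

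For part (1), since $\omega_H$ is already known to be an $R$-homomorphism, it suffices to prove $\omega_H(xh) = x\omega_H(h)$ for every $h \in H$. I would appeal to the uniqueness statement in Proposition \ref{left-action} (applied to the right $R[x,f]$-module $\D(\H)$): $x\omega_H(h)$ is the unique $\phi' \in (H^\vee)^\vee$ satisfying $(\phi'(m))rx = \omega_H(h)(mrx)$ for all $m \in H^\vee$ and $r \in R$. Thus I would plug in $\phi' = \omega_H(xh)$ and check this identity. The left-hand side is $(m(xh))rx$, and the right-hand side is $(mrx)(h)$, which by Proposition \ref{action} equals $((mr)(xh))x = (r\cdot m(xh))x$; because $I$ is a bimodule over the commutative ring $R$, these two expressions coincide. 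Hence $\omega_H$ is a left $R[x,f]$-homomorphism.

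For part (2), the strategy is dual: show $\omega_M(mx) = \omega_M(m)x$ in $(M^\vee)^\vee = \D(\D'(\M))$ by testing both sides on an arbitrary $h \in M^\vee$. Applying Proposition \ref{action} to the left $R[x,f]$-module $\D'(\M)$, the right-hand side unwinds to $(\omega_M(m)x)(h) = (\omega_M(m)(xh))x = ((xh)(m))x$. Applying Proposition \ref{left-action} with $r = 1$ gives $((xh)(m))x = h(mx)$, which equals $\omega_M(mx)(h)$, as required.

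The naturality statements in both (1) and (2) reduce to the standard naturality of $\omega_G$ as $G$ varies in ${}_R\Mod$: given a morphism $\varphi : \H \to \H'$ in ${}_{R[x,f]}\Mod$, the square defining naturality of $\omega$ already commutes as $R$-homomorphisms, and since both $\omega_H$, $\omega_{H'}$ are now known to respect $x$, the square commutes in ${}_{R[x,f]}\Mod$; similarly for right modules. I do not anticipate a real obstacle beyond careful bookkeeping of which side $R$ and $x$ act on; the two defining formulas (\ref{right-act}) and (\ref{left-act}) are precisely tailored to make the verifications above a one-line manipulation.
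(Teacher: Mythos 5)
Your proposal is correct and matches the paper's proof essentially step for step: both parts are verified by using the uniqueness characterization in Proposition \ref{left-action} together with the formula (\ref{right-act}) from Proposition \ref{action}, and the naturality claims reduce to the usual naturality of $\omega$ once $R[x,f]$-equivariance is known. The only minor point is that your final justification in part (1) (``because $I$ is a bimodule over the commutative ring $R$'') compresses two small observations — that $(mr)(xh) = r\cdot m(xh)$ by $R$-linearity of $m$, and that $zrx = (rz)x$ for $z \in I$ because the left and right $R$-actions on $I$ agree — which the paper spells out explicitly in its chain of equalities.
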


\begin{proof}
(i) In view of Propositions \ref{lt-rt} and \ref{rt-lt}, it only
remains for us to show that, for a left $R[x,f]$-module $\H = (H,
\alpha)$, the $R$-homomorphism $\omega_H : H \to (H^{\vee})^{\vee}$
is actually a left $R[x,f]$-module homomorphism. To this end, we
compare, for an $h \in H$, the elements $\omega_H (xh)$ and $x
(\omega_H (h))$. Now, $x \omega_H (h)$ is, by (\ref{left-act}) in
Proposition \ref{left-action}, the unique element $h' \in
(H^{\vee})^{\vee}$ that satisfies
$$
\left(h'(m)\right)rx  = \omega _H (h)(mr  x) \quad \text{for all $m
\in H ^{\vee}$  and  $r \in R$}.
$$
It is enough to show that  $h' = \omega _H(x h)$  satisfies this.
But
$$
(\omega _H (x h)(m)) rx = (m(x  h)) rx = (r(m (x h ))) x = (m(rx
h))x = ((mr)(xh))x = (mr x)(h),
$$
where we have used (\ref{right-act}) in \ref{action}  for the last
equality. Since $(mr x)(h) = \omega _H (h) (mrx)$, the proof of part
(i) is complete.

(ii) In view of Propositions \ref{lt-rt} and \ref{rt-lt}, it only
remains for us to show that, for a right $R[x,f]$-module $\M = (M,
\beta)$, the $R$-homomorphism $\omega_M : M \to (M^{\vee})^{\vee}$
is actually an  $R[x,f]$-homomorphism. To this end, we compare, for
an $m \in M$, the elements $\omega_M (m x)$ and $(\omega_M (m)) x$.

Now, for all $h \in M ^{\vee}$, we have
\begin{align*}
\left((\omega_M(m)) x \right)(h) &= \left(\omega _M (m)
(x h)\right) x  &\text{(by (\ref{right-act}) in \ref{action})} \\
& = \left((x h)(m))\right) x & \\
&= h(mx) &\text{(by (\ref{left-act}) in \ref{left-action})} \\
&= \left(\omega_M(m x)\right)(h). &
\end{align*}
Hence $\omega_M(m  x) = (\omega_M(m)) x$.
\end{proof}

\begin{rmk}
\label{rl.6} Let  $R'$  be a general commutative Noetherian ring and
let $I$ be an injective $R'$-module. For each $R'$-module  $M$ we
write $M ^{\vee} := \Hom_{R'} (M, I)$ and  denote by $\omega _M$ the
natural evaluation mapping  $M \to (M ^{\vee})^{\vee}$ defined by
$\omega_M (m)(h) = h(m)$ for all $h \in M^{\vee}$ and $m \in M$. We
say that $M$  is {\it $I$-reflexive} if $\omega _M$  is an
isomorphism. It is routine to check that, for an $R'$-module $M$,
the composition
$$
(\omega_M)^{\vee} \circ \omega_{M^{\vee}} : M^{\vee} \lra M^{\vee}
$$
is the identity map. Therefore, if $M$ is $I$-reflexive, then so too
is $M^{\vee}$. It is easily verified that the full subcategory of
${}_{R'}\Mod$ consisting of all $I$-reflexive modules is closed
under finite direct sums, direct summands and extensions. But in
general it is not a Serre subcategory of  ${}_{R'}\Mod$, as can be
seen by consideration of the case where $R'$ is a Noetherian
integral domain that is not a field and $I$ is taken to be the
quotient field of $R'$.

Suppose, in addition, that $(R',\fm)$ is (Noetherian) local and
complete. Choose $I = E := E_{R'}(R'/\fm)$, so that $( - )^{\vee}$
becomes the {\em Matlis-duality functor\/} $\Hom_{R'}( - , E)$. In
this case, $E$-reflexive modules are called {\em
Matlis-reflexive\/}. It is well known that all Noetherian
$R'$-modules and all Artinian $R'$-modules are Matlis-reflexive, and
that  $(- )^{\vee}$  provides a duality between the category of all
Noetherian $R'$-modules (and all $R'$-homomorphisms between them)
and the category of all Artinian $R'$-modules (and all
$R'$-homomorphisms between them). Furthermore it was proved by E.
Enochs \cite[Proposition 1.3]{Enochs} that an $R'$-module $M$ is
Matlis-reflexive if and only if  it can be embedded into a short
exact sequence
$$
\begin{CD}
0 @>>> N @>>> M @>>> A @>>> 0,
\end{CD}
$$
in which  $A$  is an Artinian $R'$-module and  $N$  is a Noetherian
$R'$-module. Therefore, the full subcategory of  ${}_{R'}\Mod$
consisting of all Matlis-reflexive modules is an Abelian category
itself, and is actually the smallest Serre subcategory of
${}_{R'}\Mod$ that contains all Noetherian modules and all Artinian
modules.
\end{rmk}

Let the situation and notation be as in Proposition {\rm
\ref{natural transformation}}, so that $R$  is $F$-finite and  $I$
is an injective $R$-module with a fixed $(R, R)$-bimodule
isomorphism $\Psi : {}_f I \to \Hom _{rR}(R_f , I)$. Let
$\mathcal{L}_I$ be the category of all left $R[x,f]$-modules which
are $I$-reflexive as $R$-modules, and all left
$R[x,f]$-homomorphisms between them. Similarly let $\mathcal{R}_I$
be the category of right $R[x,f]$-modules which are $I$-reflexive as
$R$-modules, and all right $R[x,f]$-homomorphisms between them.

In general, $\mathcal{L}_I \subseteq {}_{R[x,f]}\Mod$  and
 $\mathcal{R}_I \subseteq \Mod_{R[x,f]}$  are full subcategories,
 which are closed under finite direct sums, direct summands and extensions.
If, in addition, $(R, \fm)$  is local and complete and $I =
E_R(R/\fm)$, then
 $\mathcal{L}_I$  and  $\mathcal{R}_I$  are Abelian categories by Remark \ref{rl.6}.

From the definitions of the functors $\D$  and  $\D'$, it is easy to
use Remark \ref{rl.6} to see that they induce functors  $\D :
\mathcal{L}_I \to (\mathcal{R}_I)^{\text{op}}$  and
 $\D' : \mathcal{R}_I \to (\mathcal{L}_I)^{\text{op}}$.

The following theorem is the main result of this paper.


\begin{thm}\label{main thm}
Let the situation and notation be as in {\rm \ref{natural
transformation}}, so that $R$  is $F$-finite and  $I$  is an
injective $R$-module with a fixed $(R, R)$-bimodule isomorphism
$\Psi : {}_f I \to \Hom _{rR}(R_f , I)$. Then the functors  $\D :
\mathcal{L}_I \to (\mathcal{R}_I)^{\text{{\rm op}}}$  and
 $\D' : \mathcal{R}_I \to (\mathcal{L}_I)^{\text{{\rm op}}}$  are inverse
 equivalences of categories.
\end{thm}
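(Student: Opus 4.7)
The strategy is that essentially all the technical work has already been done in the preparatory results, and the proof of Theorem \ref{main thm} reduces to a clean bookkeeping argument. The plan is to verify that the functors $\D$ and $\D'$ restrict to the subcategories $\mathcal{L}_I$ and $\mathcal{R}_I$, and then to observe that the natural evaluation maps constructed in Proposition \ref{natural transformation} become isomorphisms precisely on the reflexive objects.

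First I would check that $\D(\mathcal{L}_I) \subseteq (\mathcal{R}_I)^{\text{op}}$ and $\D'(\mathcal{R}_I) \subseteq (\mathcal{L}_I)^{\text{op}}$. The functors $\D$ and $\D'$ agree on underlying $R$-modules with the duality functor $(-)^{\vee} = \Hom_R(-,I)$, so what must be shown is that if $H$ (respectively $M$) is $I$-reflexive as an $R$-module, then so is $H^{\vee}$ (respectively $M^{\vee}$). This is exactly the content of the routine identity $(\omega_M)^{\vee} \circ \omega_{M^{\vee}} = \Id_{M^\vee}$ recorded in Remark \ref{rl.6}: when $\omega_M$ is an isomorphism, so is $\omega_{M^{\vee}}$.

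Next, for each $\H = (H,\alpha) \in \mathcal{L}_I$, Proposition \ref{natural transformation}(i) tells us that $\omega_H : \H \to \D'(\D(\H))$ is a left $R[x,f]$-homomorphism and that these maps constitute a natural transformation $\Id_{{}_{R[x,f]}\Mod} \to \D' \circ \D$. Since $H$ is $I$-reflexive by hypothesis, $\omega_H$ is an isomorphism of underlying $R$-modules, and any $R[x,f]$-homomorphism whose underlying $R$-map is an isomorphism is automatically an $R[x,f]$-isomorphism. Thus, restricted to $\mathcal{L}_I$, the natural transformation $\omega$ witnesses a natural isomorphism $\Id_{\mathcal{L}_I} \stackrel{\cong}{\lra} \D' \circ \D$. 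An entirely symmetric argument, using Proposition \ref{natural transformation}(ii), gives a natural isomorphism $\Id_{\mathcal{R}_I} \stackrel{\cong}{\lra} \D \circ \D'$, completing the proof that $\D$ and $\D'$ are inverse equivalences.

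There is no real obstacle left at this stage: the substantive work was the verification in Proposition \ref{natural transformation} that $\omega_H$ and $\omega_M$ respect the $x$-actions (which required the compatibility formulas \eqref{right-act} and \eqref{left-act}, and ultimately Lemma \ref{ann}). Once that is in hand, Theorem \ref{main thm} is just the assembly of those pieces together with the elementary reflexivity stability recorded in Remark \ref{rl.6}.
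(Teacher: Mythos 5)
Your proof is correct and follows essentially the same route as the paper: the paper's proof (together with the paragraph immediately preceding the theorem, which handles the fact that $\D$ and $\D'$ restrict to the reflexive subcategories via Remark \ref{rl.6}) is exactly the assembly you describe, invoking Proposition \ref{natural transformation} to see that $\omega$ is an $R[x,f]$-natural transformation which becomes an isomorphism objectwise on $\mathcal{L}_I$ and $\mathcal{R}_I$. Your write-up merely makes explicit the two points the paper leaves terse, namely the stability of reflexivity under $(-)^{\vee}$ and the observation that an $R[x,f]$-homomorphism bijective on underlying $R$-modules is an $R[x,f]$-isomorphism.
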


\begin{proof}
For any  $\H =(H, \alpha) \in \mathcal{L}_I$, the evaluation mapping
$\omega _H : H \to (H^{\vee})^{\vee}$  is an $R[x,f]$-isomorphism,
by Proposition \ref{natural transformation}(i). Therefore the
natural transformation  $\omega : \Id \to \D' \circ \D$ of
\ref{natural transformation}(i) is a natural equivalence of functors
on $\mathcal{L}_I$. Similarly, $\omega: \Id \to \D\circ \D'$  is a
natural equivalence of functors on $\mathcal{R}_I$.
\end{proof}

From this theorem we have the following corollary in the complete
local case.

\begin{cor}
Assume that  $(R, \fm)$  is $F$-finite, complete and local, and let
$I = E := E_R(R/\fm)$. We fix an $(R,R)$-bimodule isomorphism  $\Psi
: {}_fE \to \Hom _{rR}(R_f, E)$.
\begin{enumerate}
\item
It follows from Theorem\/ {\rm \ref{main thm}} that $\D$ and $\D'$
are inverse equivalences between the category of left
$R[x,f]$-modules that are Artinian as $R$-modules and the category
of right $R[x,f]$-modules that are Noetherian as $R$-modules.

\item
Similarly, $\D$ and $\D'$ are inverse equivalences between the
category of right $R[x,f]$-modules that are Artinian as $R$-modules
and the category of left $R[x,f]$-modules that are Noetherian as
$R$-modules.
\end{enumerate}
\end{cor}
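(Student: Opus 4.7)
The plan is to deduce the corollary directly from Theorem \ref{main thm} by restricting the equivalence $\D \dashv \D'$ to the appropriate subcategories, using only classical Matlis duality on the underlying $R$-modules.

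First I would record what Remark \ref{rl.6} gives us in this situation: when $(R,\fm)$ is complete local and $I = E$, every Artinian $R$-module and every Noetherian $R$-module is $E$-reflexive. Hence any left $R[x,f]$-module whose underlying $R$-module is Artinian (or Noetherian) automatically lies in $\mathcal{L}_E$, and likewise on the right. So if we let $\mathcal{L}_E^{\Art}$, $\mathcal{L}_E^{\Noe}$ denote the full subcategories of $\mathcal{L}_E$ consisting of those left $R[x,f]$-modules Artinian, respectively Noetherian, as $R$-modules, and define $\mathcal{R}_E^{\Art}$, $\mathcal{R}_E^{\Noe}$ analogously, these sit as full subcategories of $\mathcal{L}_E$ and $\mathcal{R}_E$.

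Next I would check that $\D$ and $\D'$ restrict appropriately. By construction, $\D(\H)$ has underlying $R$-module $H^{\vee}$, and $\D'(\M)$ has underlying $R$-module $M^{\vee}$; only the $R[x,f]$-structure is new. Classical Matlis duality says $( - )^{\vee}$ carries Artinian $R$-modules to Noetherian $R$-modules and vice versa. Hence $\D$ sends $\mathcal{L}_E^{\Art}$ into $\mathcal{R}_E^{\Noe}$ and $\D'$ sends $\mathcal{R}_E^{\Noe}$ into $\mathcal{L}_E^{\Art}$; symmetrically, $\D$ sends $\mathcal{L}_E^{\Noe}$ into $\mathcal{R}_E^{\Art}$ and $\D'$ sends $\mathcal{R}_E^{\Art}$ into $\mathcal{L}_E^{\Noe}$.

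Finally, the natural transformations $\omega : \Id \to \D' \circ \D$ and $\omega : \Id \to \D \circ \D'$ furnished by Proposition \ref{natural transformation}, and shown in Theorem \ref{main thm} to be natural equivalences on $\mathcal{L}_E$ and $\mathcal{R}_E$, remain natural equivalences after restriction to the chosen full subcategories. This yields part (i), and the same argument with the words ``left''/``right'' and ``Artinian''/``Noetherian'' interchanged gives part (ii). There is no real obstacle here beyond observing that $\D$ and $\D'$ act on the underlying $R$-module as the Matlis-duality functor, so the interchange of Artinian and Noetherian is automatic; everything nontrivial has already been absorbed into Theorem \ref{main thm}.
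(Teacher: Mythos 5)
Your proposal is correct and matches the paper's intended argument: the paper gives no separate proof for this corollary, relying (as you do) on Theorem \ref{main thm} together with the observation from Remark \ref{rl.6} that classical Matlis duality makes Artinian and Noetherian $R$-modules $E$-reflexive and exchanges the two classes, so the equivalence restricts to the stated full subcategories. Spelling out that $\D$ and $\D'$ act on the underlying $R$-module exactly as the Matlis-duality functor is precisely the implicit step, and you have filled it in correctly.
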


\section{\sc Graded annihilators}
\label{ga}

Let $\fB$ be a subset of $R[x,f]$. It is easy to see that $\fB$ is a
graded two-sided ideal of $R[x,f]$ if and only if there is an
ascending chain $(\fb_n)_{n \in \nn}$ of ideals of $R$ (which must,
of course, be eventually stationary) such that $\fB =
\bigoplus_{n\in\nn}\fb_n x^n$. In particular, note that $R[x,f]x^t =
\bigoplus_{i \geq t} Rx^i$ is a graded two-sided ideal of $R[x,f]$,
for each $t \in \nn$.

\begin{defs}
\label{ga.1} Let $\H = (H, \alpha)$ denote a left $R[x,f]$-module
and let $\M = (M, \beta)$  denote a right $R[x,f]$-module; let $\fB$
be a two-sided ideal of $R[x,f]$.

The {\em annihilator of\/ $\M$\/} will be denoted by $\ann \M
_{R[x,f]}$. Thus
$$
\ann \M _{R[x,f]} = \{ \theta \in R[x,f] \ | \  g \theta = 0
\mbox{~for all~} g \in M \},
$$
and this is a two-sided ideal of $R[x,f]$.
The annihilator $\ann_{R[x,f]} \H$ of $\H$ is defined similarly;
it is also a two-sided ideal of $R[x,f]$.

We define the {\em graded annihilator\/} $\grann \M _{R[x,f]}$ of
the right $R[x,f]$-module $\M$ by
$$
\grann \M _{R[x,f]} = \left\{ \sum_{i=0}^n r_ix^i \in R[x,f] \ \Big|
\ n \in \nn \mbox{~and~} r_i \in R,\, r_ix^i \in \ann \M _{R[x,f]}
\mbox{~for all~} i = 0, \ldots, n\right\}.
$$
Thus $\grann \M _{R[x,f]}$ is the largest graded two-sided ideal of
$R[x,f]$ contained in $\ann \M _{R[x,f]}$.

The graded annihilator of $\H$ is defined similarly: it is the
largest graded two-sided ideal of $R[x,f]$ that annihilates $\H$.
See \cite[1.5]{ga}. Recall also that $\ann_{\H} \fB$ denotes the
$R[x,f]$-submodule of $\H$ given by $$\ann_{\H} \fB = \{h \in \H \ |
\ \theta  h = 0 \mbox{~for all~} \theta \in \fB\}.$$

Observe that $M \fB$ is an $R[x,f]$-submodule of $\M$. For $t \in
\nn$, we have $M  R[x,f]x^t = \{m  x^t : m \in M\}$, and we shall
therefore denote this $R[x,f]$-submodule of $\M$ by $\M x^t$. We
shall say that $\M$ is {\em $x$-divisible\/} precisely when $\M = \M
x$.

Recall (from \cite[1.2]{ga}) that $\H = (H, \alpha)$ is said to be
{\em $x$-torsion-free\/} if $x h = 0$, for $h \in H$, only when $h =
0$. The set $\Gamma_x(H) := \left\{ h \in H \ | \ x^j h = 0
\mbox{~for some~} j \in \N \right\}$ is an $R[x,f]$-submodule of
$\H$, called the  \/{\em $x$-torsion submodule} of $\H$.
\end{defs}

We are now going to compare, in the situation of Theorem \ref{main
thm}, the graded annihilators of a left $R[x,f]$-module $\H$ and the
right $R[x,f]$-module $\D(\H)$, and also the graded annihilators of
a right $R[x,f]$-module $\M$ and the left $R[x,f]$-module $\D'
(\M)$.

Recall that an {\em injective cogenerator for $R$\/} is an injective
$R$-module $I$ such that $\Hom_R(G, I) \neq 0$ for every non-zero
$R$-module $G$. See \cite[p.\ 46]{SV}. It should be remarked that if
$I$  is an injective cogenerator, then the evaluation map  $\omega
_G : G \lra  (G^{\vee})^{\vee}$  is a monomorphism for all
$R$-modules $G$.

\begin{prop}
\label{ga.2} Let the situation and notation be as in Theorem {\rm
\ref{main thm}}. Let $\H = (H, \alpha)$ be a left $R[x,f]$-module
and $\M = (M, \beta)$ be a right $R[x,f]$-module. Then
\begin{enumerate}
\item
$\grann_{R[x,f]} \H \subseteq \grann \D (\H)_{R[x,f]}$;
\item
$\grann \M _{R[x,f]} \subseteq \grann_{R[x,f]}\D'(\M)$;
\item
if $I$ is an injective cogenerator for $R$, we have
$$
\grann_{R[x,f]} \H  = \grann \D (\H)_{R[x,f]} \quad \mbox{and} \quad
\grann \M _{R[x,f]} = \grann_{R[x,f]}\D '(\M) \mbox{;}
$$
\item
in particular, in the special case in which $(R,\fm)$ is local, and
$I$ is taken to be $E_R(R/\fm)$, we have
$$
\grann_{R[x,f]} \H = \grann \D (\H)_{R[x,f]}
\quad \mbox{and} \quad \grann \M _{R[x,f]} = \grann_{R[x,f]}\D'(\M).
$$
\end{enumerate}
\end{prop}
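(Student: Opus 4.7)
My overall strategy is to reduce the whole proposition to a computation on homogeneous elements of $R[x,f]$, and then, for the equalities in (iii) and (iv), to invoke the natural transformation $\omega$ of Proposition \ref{natural transformation}. A graded two-sided ideal $\bigoplus_{n\in\nn}\fb_nx^n$ annihilates a (left or right) $R[x,f]$-module exactly when each of its components $\fb_nx^n$ does, so for (i) and (ii) it suffices to show, for fixed $r\in R$ and $n\in\nn$, that $rx^n$ annihilates the dualized module whenever it annihilates the original.

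For (i), I would first establish, by a straightforward induction on $n$ from (\ref{right-act}), the iterated formula $(mx^n)(h)=(m(x^nh))x^n$ for all $m\in H^{\vee}$ and $h\in H$. Using $R$-linearity to pull the scalar $r$ into the argument, this gives $(m\cdot rx^n)(h)=(m(rx^nh))x^n$, so if $rx^nh=0$ for every $h\in H$, then $m\cdot rx^n=0$ for every $m\in H^{\vee}$, i.e.\ $rx^n\in\grann\D(\H)_{R[x,f]}$.

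Statement (ii) is the main obstacle. I would iterate the characterization in Proposition \ref{left-action} to derive
$$(x^nh)(m)\cdot rx^n=h(mrx^n)\quad\text{for all }m\in M,\; h\in M^{\vee},\; r\in R;$$
the skew-commutation rule $rx\cdot sx=rs^px^2$ makes the induction slightly delicate, but setting one of the two scalars equal to $1$ at each inductive stage extracts the formula cleanly. Given this, if $rx^n\in\grann\M_{R[x,f]}$ then $m'rx^n=0$ for every $m'\in M$; substituting $m'=ms$ and using $R$-linearity of $x^nh$ yields $((x^nh)(m)\cdot r)\cdot sx^n=0$ for all $s\in R$, and Lemma \ref{ann} then forces $(x^nh)(m)\cdot r=0$, which is exactly $(rx^nh)(m)=0$. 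Hence $rx^n\in\grann_{R[x,f]}\D'(\M)$.

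For (iii), the key point is that when $I$ is an injective cogenerator the evaluation maps $\omega_H:\H\to\D'(\D(\H))$ and $\omega_M:\M\to\D(\D'(\M))$ of Proposition \ref{natural transformation} are \emph{injective} left, respectively right, $R[x,f]$-homomorphisms. Any graded two-sided ideal annihilating $\D'(\D(\H))$ therefore annihilates $\omega_H(\H)\cong\H$, and similarly for $\D(\D'(\M))$ and $\M$; chaining these containments with (i) and (ii) applied to $\D(\H)$ and $\D'(\M)$ respectively yields both desired equalities. Finally, (iv) is immediate from (iii), since the injective envelope $E_R(R/\fm)$ of the simple module of a Noetherian local ring is a standard example of an injective cogenerator (by a one-line Nakayama argument).
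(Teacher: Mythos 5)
Your proposal is correct and takes essentially the same approach as the paper: reduction to homogeneous components, the iterated versions of (\ref{right-act}) and (\ref{left-act}) proved by induction (with the same trick of introducing an extra scalar slot so that Lemma \ref{ann} can be applied in part (ii)), and the chaining argument via the injectivity of $\omega_H$ and $\omega_M$ for parts (iii) and (iv). Your remark about setting one scalar to $1$ at each inductive stage is a helpful clarification of a step the paper leaves implicit.
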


\begin{proof}
(i) Recall from (\ref{right-act}) in \ref{action} that the right
action of $R[x,f]$ on $\D (\H) =(H ^{\vee}, D(\alpha))$ is such that
$(m x)(h) = \left(m(xh)\right)x$ for all $m \in H ^{\vee}$ and all
$h \in H$; an easy inductive argument shows that $(m x^n)(h) =
\left(m(x^n h)\right) x^n$ for all $n \in \N$.

Now let $r \in R$ and $n \in \nn$ be such that $rx^n  H = 0$. We
show that $rx^n$ annihilates the right $R[x,f]$-module $\D
(\H)=(H^{\vee}, D(\alpha))$. Let $m \in H^{\vee}$ and $h \in H$.
Then, by the preceding paragraph,
$$
(m rx^n)(h) = ((mr)x^n)(h) = \left((mr)(x^n h)\right) x^n =
\left(m(rx^n h)\right)x^n = 0.
$$
It follows that $\grann_{R[x,f]} \H \subseteq \grann \D(\H)_{R[x,f]}$.

(ii) Let $r \in R$ and $n \in \nn$ be such that $M rx^n = 0$. We
show that $rx^n$ annihilates the left $R[x,f]$-module $\D' (\M) =
(M^{\vee}, D(\beta))$. This is clear when $n = 0$, and so we suppose
that $n> 0$. Let $h \in M^{\vee}$ and $m \in M$. Then recall from
(\ref{left-act}) in \ref{left-action} that $((xh)(m)) r'x = h(m
r'x)$ for all $m \in M$ and $r' \in R$. An easy inductive argument
shows that
$$
((x^n  h)(m))  r'x^n = h (m r'x ^n) \quad \text{for all $m \in M$
and $r' \in R$.}
$$
It follows from this that, for $r \in R$,
$$
((rx^n  h)(m))  r'x^n = h (m rr'x ^n) \quad \text{for all $m \in M$
and $r' \in R$.}
$$
But, since  $m rr'x^n = mr' rx^n =0$, we have  $rx^n h(m) = 0$ for
all $m \in M$, by Lemma \ref{ann}. Therefore $rx^nh = 0$ and $rx^n$
annihilates the left $R[x,f]$-module $\D' (\M)$. Hence
$$
\grann \M _{R[x,f]} \subseteq \grann_{R[x,f]} \D'(\M).
$$

(iii) By parts (i) and (ii), we have $\grann _{R[x,f]} \H \subseteq
\grann \D(\H)_{R[x,f]} \subseteq \grann _{R[x,f]} \D'\circ \D (\H)$.
However, since $I$ is an injective cogenerator for $R$, the
homomorphism of left $R[x,f]$-modules $\omega_H : H \lra
(H^{\vee})^{\vee}$ is actually an $R[x,f]$-monomorphism, and so it
follows that
$$
\grann _{R[x,f]}\D' \circ \D (\H) \subseteq \grann_{R[x,f]} \H.
$$
The first equality is therefore proved.
The second is proved similarly.

(iv)
This is a special case of part (iii), because, when $(R,\fm)$ is local, $E_R(R/\fm)$ is an injective cogenerator for $R$.
\end{proof}

\begin{prop}
\label{ga.3}
Let the situation and notation be as in\/ {\rm \ref{main thm}}, and assume in addition that $(R ,\fm)$ is local and complete and that $I$ is taken to be $E := E_R(R/\fm)$.
Let $\fB$ be a graded two-sided ideal of $R[x,f]$.

\begin{enumerate}
\item
Let $\H = (H, \alpha)$ be a left $R[x,f]$-module that is Matlis-reflexive as  $R$-module.
Let $\iota : \ann_{\H} \fB \lra \H$ denote the inclusion $R[x,f]$-monomorphism.  Then the induced homomorphism of right $R[x,f]$-modules $\D (\iota) : \D(\H) \lra  \D(\ann_{\H} \fB)$ has kernel $\D (\H) \fB$.

\item
Let $\M = (M, \beta)$ be a right $R[x,f]$-module that is Noetherian
as $R$-module. Let $\sigma : \M \fB \lra \M$ denote the inclusion
$R[x,f]$-monomorphism. Then the induced homomorphism of left
$R[x,f]$-modules $\D'(\sigma) : \D'(\M) \lra  \D'(\M \fB)$  has
kernel $\ann_{\D'(\M)} \fB$, so that $\ann_{\D' (\M)} \fB \cong \D'
(\M / \M\fB)$ as left $R[x,f]$-modules.
\end{enumerate}
\end{prop}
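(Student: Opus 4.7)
My plan is to prove (ii) first via direct application of $\D'$ to a short exact sequence, and then to deduce (i) from (ii) using the injective-cogenerator property of $E$ and the naturality of the evaluation map $\omega$.

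For (ii), apply the contravariant functor $\D' = \Hom_R(-,E)$ to the short exact sequence of right $R[x,f]$-modules
\[
0 \lra \M\fB \lra \M \lra \M/\M\fB \lra 0.
\]
Since $E$ is $R$-injective, $\D'$ is exact, yielding a short exact sequence of left $R[x,f]$-modules that identifies $\ker \D'(\sigma)$, as an $R$-submodule of $\D'(\M) = \Hom_R(M,E)$, with $\{h : h(M\fB) = 0\}$. It then remains to recognize this submodule as $\ann_{\D'(\M)}\fB$. Writing $\fB = \bigoplus_{n \in \nn} \fb_n x^n$, it suffices to show that for each homogeneous $rx^n \in \fB$ and each $h \in \D'(\M)$ one has $(rx^n)h = 0$ if and only if $h(M \cdot rx^n) = 0$. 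An induction on $n$ using Proposition~\ref{left-action} gives the identity $((rx^n h)(m))r'x^n = h(mr'rx^n)$ for all $m \in M$ and $r' \in R$, after which Lemma~\ref{ann}, applied to the element $(rx^n h)(m) \in {\bf I}$, yields the equivalence.

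For (i), the inclusion $\D(\H)\fB \subseteq \ker \D(\iota)$ is a direct calculation using iteration of (\ref{right-act}): for $m \in \D(\H)$, $rx^n \in \fB$ and $h \in \ann_\H \fB$, one has $(m \cdot rx^n)(h) = (m((rx^n)h))x^n = 0$, since $(rx^n)h = 0$. For the reverse inclusion I use that $E$ is an injective cogenerator, so that $\omega_G$ is a monomorphism for every $R$-module $G$ (as recorded just before Proposition~\ref{ga.2}); a standard separation argument then shows that two submodules $N_1 \subseteq N_2$ of $\D(\H)$ coincide provided $\{\psi \in \Hom_R(\D(\H),E) : \psi(N_1)=0\} = \{\psi : \psi(N_2)=0\}$. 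I therefore plan to show that both sets $\{\psi \in \D'\D(\H) : \psi(\D(\H)\fB) = 0\}$ and $\{\psi \in \D'\D(\H) : \psi(\ker \D(\iota)) = 0\}$ equal $\ann_{\D'\D(\H)}\fB$. The first equality follows from part (ii) applied to the right $R[x,f]$-module $\D(\H)$, which is Matlis-reflexive by Remark~\ref{rl.6} (note also that the proof of (ii) sketched above does not actually use the Noetherian hypothesis). The second follows by identifying the set in question as the image of the injective map $\D'(\D(\iota)) : \D'\D(\ann_\H\fB) \to \D'\D(\H)$ (arising from the surjection $\D(\iota)$, which is surjective by exactness of $\D$); by naturality of $\omega$ from Proposition~\ref{natural transformation}, this image equals $\omega_H(\ann_\H \fB)$, and this last set equals $\ann_{\D'\D(\H)}\fB$ because $\omega_H$ is a left $R[x,f]$-isomorphism.

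The main obstacle I anticipate is the careful bookkeeping of naturality: in particular, identifying the image of $\D'(\D(\iota))$ with $\omega_H(\ann_\H\fB)$ also requires $\omega_{\ann_\H\fB}$ to be an isomorphism, i.e.\ that $\ann_\H\fB$ be Matlis-reflexive. This follows from the Serre-subcategory property of the Matlis-reflexive modules recorded in Remark~\ref{rl.6}, since $\ann_\H\fB$ is a submodule of the Matlis-reflexive module $\H$.
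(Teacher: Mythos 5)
Your proof is correct, but it organizes the argument in the opposite order from the paper's. The paper proves (i) first: it establishes $\D(\H)\fB\subseteq\Ker\D(\iota)$ via Proposition~\ref{ga.2}(i), builds the induced epimorphism $\phi:\D(\H)/\D(\H)\fB\to\D(\ann_\H\fB)$, and shows $\D'(\phi)$ is an isomorphism by a diagram chase involving $\omega_H$, $\omega_{\ann_\H\fB}$, the containment $\Ima\D'(\lambda)\subseteq\ann_{\D'\D(\H)}\fB$ (from Proposition~\ref{ga.2}(ii)), and injectivity of $\D'(\lambda)$; the injective-cogenerator property then yields that $\phi$ itself is an isomorphism. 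Part (ii) is then deduced by applying part (i) to the Artinian left module $\D'(\M)$ and another diagram chase. You instead prove (ii) directly by applying the exact functor $\D'$ to $0\to\M\fB\to\M\to\M/\M\fB\to 0$ and identifying $\Ker\D'(\sigma)=\{h:h(M\fB)=0\}$ with $\ann_{\D'(\M)}\fB$ via the homogeneous computation using Lemma~\ref{ann} and (\ref{left-act}) — and then deduce (i) from (ii) by the cogenerator separation argument. Your route has two genuine advantages: the direct proof of (ii) is more transparent and, as you correctly point out, does not use the Noetherian hypothesis at all, which is exactly what makes the application to the module $\D(\H)$ in the proof of (i) legitimate (since $\D(\H)$ is Matlis-reflexive but generally neither Noetherian nor Artinian). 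One small remark: the paper's diagram chase for (i) does not actually need $\omega_{\ann_\H\fB}$ to be an isomorphism (only injectivity of $\D'(\lambda)$ and surjectivity of $\phi$ are invoked), whereas your argument does use Matlis-reflexivity of $\ann_\H\fB$; you handle this correctly via the Serre-subcategory observation in Remark~\ref{rl.6}.
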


\begin{proof}
(i) Since $\fB \subseteq \grann_{R[x,f]}(\ann_{\H} \fB) \subseteq
\grann(\D (\ann_{\H} \fB))_{R[x,f]}$ (by Proposition \ref{ga.2}(i)),
it follows from the fact that $\D(\iota)$ is an
$R[x,f]$-homomorphism that $\D(\H) \fB \subseteq \Ker \D(\iota)$.
There is therefore an induced $R[x,f]$-epimorphism $\phi : \D( \H
)/\D( \H )\fB  \lra   \D (\ann_{\H} \fB)$ for which $\phi( m + \D
(\H) \fB) = \D (\iota) (m)$ for all $m \in \D(\H)$. Let $\lambda :
\D (\H)  \lra  \D(\H)/\D(\H)\fB$  denote the canonical
$R[x,f]$-epimorphism, and note that $\phi \circ \lambda = \D
(\iota)$. We therefore have a commutative diagram
$$
\begin{CD}
\ann_{\H} \fB  @>>{\iota}> \H  @=  \H \\
@VV{ \omega_{\ann_{\H} \fB} }V   @. @V{\cong}V{ \omega_H }V \\
\D'\circ \D (\ann_{\H} \fB)
@>>{ \D'(\phi) }> \D'(\D (\H) /\D(\H) \fB) @>>{\D'(\lambda)}> \D'\circ \D (\H) \\
\end{CD}
$$
in the category $\!\phantom{i}_{R[x,f]}\Mod$. By Proposition
\ref{ga.2}(ii), we have  $\Ima \D'(\lambda) \subseteq \ann_{\D'\circ
\D(\H)}\fB$; since $\omega_H$ is an $R[x,f]$-isomorphism and
$\D'(\lambda)$ and $\D'(\phi)$ are monomorphisms, it follows from
the above commutative diagram that $\D'(\phi)$ is an isomorphism.
Since $I = E_R(R/\fm)$ is an injective cogenerator for $R$, we can
therefore deduce that $\phi$ is an isomorphism, so that $\D(\H) \fB
= \Ker \D(\iota)$.

(ii) Let $j : \ann_{\D'(\M)} \fB \lra \D'(\M)$ denote the inclusion
map and $k : \M \lra \M /\M \fB$ denote the natural epimorphism.
Apply part (i) to the left $R[x,f]$-module $\D'(\M)$ to obtain an
exact sequence
$$
\begin{CD}
0 @>>>  (\D\circ \D' (\M )) \fB @>>> \D\circ \D' (\M ) @>{\D(j)}>> \D(\ann_{\D'(\M)} \fB) @>>> 0 \\
\end{CD}
$$
in $\Mod_{R[x,f]}$. Since $M$ is Noetherian as $R$-module, the
$R[x,f]$-homomorphism $\omega_M : \M  \lra \D'\circ \D(\M)$ is an
isomorphism. There is therefore a commutative diagram
$$
\begin{CD}
0 @>>> \M \fB @>>> \M @>{k}>> \M/\M \fB @>>> 0 \\
@. @V{\cong}VV @V{\cong}V{\omega_M}V @. @. \\
0 @>>> (\D\circ \D' (\M))\fB  @>>> \D\circ \D' (\M) @>{\D(j)}>> \D(\ann_{\D'(\M)} \fB) @>>> 0 \\
\end{CD}
$$
with exact rows in the category $\Mod_{R[x,f]}$.
This induces an $R[x,f]$-isomorphism
$\gamma : \M / \M \fB \stackrel{\cong}{\lra} \D (\ann_{\D'(\M)}\fB)$  which,
when inserted into the above diagram, is such that the extended diagram is still commutative.
Now apply the functor $\D'$ to the right-most square (involving $\gamma$) in that extended diagram:
the result is the right-most square in the commutative diagram
$$
\begin{CD}
\ann_{\D' (\M)} \fB  @>{\omega_{\ann_{\D' (\M)} \fB} }>{\cong}>
\D'\circ \D (\ann_{\D' (\M)} \fB) @>{\D'(\gamma)}>{\cong}> \D' (\M /\M \fB) \\
@V{ \subseteq }V{j}V  @VV{ \D'(\D(j))}V  @VV{\D'(k)}V \\
\D'(\M)  @>{\omega_{M^{\vee}}}>{ \cong }> \D'\circ\D \circ \D' (\M) @>{ \D'(\omega_M)}>{\cong}>  \D'( \M)~~~\qquad. \\
\end{CD}
$$
Note that $\D' (\M)$ is Artinian as $R$-module, so that
$\omega_{M^{\vee}}$ and $\omega_{\ann_{\D'(\M)} (\fB)}$ are both
isomorphisms. Since $\D'(\omega_M) \circ \omega_{M^{\vee}} =
\Id_{M^{\vee}}$ (as noted in Remark \ref{rl.6}), it follows from
this commutative diagram that the kernel of the induced
$R[x,f]$-homomorphism $\D'(\sigma) : \D'(\M) \lra \D'(\M \fB)$,
which is equal to the image of the $R[x,f]$-homomorphism $\D'(k) :
\D'(\M /\M \fB) \lra \D'(\M)$, is precisely $\ann_{\D'(\M)} \fB$.
\end{proof}

\begin{cor}
\label{ga.4} Let the situation and notation be as in\/ {\rm
\ref{main thm}}, and assume in addition that $(R ,\fm)$ is local and
complete and that $I$ is taken to be $E := E_R(R/\fm)$. Let $\M$ be
a right $R[x,f]$-module that is Noetherian as $R$-module. Then $\M$
is $x$-divisible if and only if $\D' (\M)$ is $x$-torsion-free.
\end{cor}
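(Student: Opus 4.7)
The plan is to apply Proposition \ref{ga.3}(ii) to the graded two-sided ideal $\fB := R[x,f]x = \bigoplus_{i \geq 1} Rx^i$ of $R[x,f]$, and then reinterpret the two sides of the resulting identification $\ann_{\D'(\M)}\fB \cong \D'(\M/\M\fB)$.

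First I would identify the quantity on the $\M$-side. By the remark following Definition \ref{ga.1}, $\M R[x,f]x$ coincides with $\M x$, so $\M \fB = \M x$ and therefore $\M/\M\fB = \M/\M x$. Consequently, $\M$ is $x$-divisible precisely when $\M/\M\fB = 0$.

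Next I would identify the quantity on the $\D'(\M)$-side. Since $\fB = \bigoplus_{i\ge 1} Rx^i$, we have $\ann_{\D'(\M)}\fB = \{h \in \D'(\M) : rx^i h = 0 \text{ for all } r \in R,\, i \ge 1\}$. Clearly any $h$ annihilated by $x$ lies in this set, and conversely taking $r=1, i=1$ shows every element of this set is annihilated by $x$. Thus $\ann_{\D'(\M)}\fB = \{h \in \D'(\M) : xh = 0\}$, and this submodule is zero exactly when $\D'(\M)$ is $x$-torsion-free in the sense of Definition \ref{ga.1}.

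Combining these two observations with Proposition \ref{ga.3}(ii), which provides the left $R[x,f]$-isomorphism $\ann_{\D'(\M)}\fB \cong \D'(\M/\M\fB) = \D'(\M/\M x)$, the claim reduces to the statement $\D'(\M/\M x) = 0 \iff \M/\M x = 0$. This is immediate: $\M/\M x$ is a Noetherian $R$-module as a quotient of the Noetherian $R$-module $M$, hence Matlis-reflexive (see Remark \ref{rl.6}); and since $\D'$ coincides with the Matlis dual $(-)^{\vee}=\Hom_R(-,E)$ on underlying $R$-modules, the faithfulness of Matlis duality on the reflexive category gives the desired equivalence. The argument is entirely formal given Proposition \ref{ga.3}(ii); the only bookkeeping to watch is the identification $\M\fB = \M x$ and the translation between $\ann_{\D'(\M)}\fB$ and $x$-torsion-freeness, neither of which presents a genuine obstacle.
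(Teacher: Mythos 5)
Your proof is correct and takes essentially the same approach as the paper: both arguments turn on Proposition~\ref{ga.3}(ii) applied to $\fB = R[x,f]x$, with the only cosmetic difference being that you phrase the final step via the explicit isomorphism $\ann_{\D'(\M)}\fB \cong \D'(\M/\M\fB)$ and faithfulness of Matlis duality on the Noetherian quotient $\M/\M x$, whereas the paper works directly with the inclusion $j : \M x \to \M$ and uses that $\D'(j)$ is an epimorphism with kernel $\ann_{\D'(\M)}R[x,f]x$ together with $E$ being an injective cogenerator.
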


\begin{proof}
Let $j : \M x \lra \M$ be the inclusion $R[x,f]$-monomorphism.
By Proposition \ref{ga.3}(ii), the kernel of $\D'(j) : \D'(\M) \lra \D'(\M x)$ is $\ann_{\D'(\M)} R[x,f]x$.

Now $\M$ is $x$-divisible if and only if $j$ is an isomorphism;
since $E$ is an injective cogenerator for $R$, this is the case if and only if $\D'(j)$ is an isomorphism; and, by the above comment (and the fact that $\D'(j)$ must always be an $R[x,f]$-epimorphism), $\D'(j)$ is an isomorphism if and only if $\ann_{\D'(\M)}R[x,f]x = 0$, that is, if and only if $\D'(\M)$ is $x$-torsion-free.
\end{proof}


\section{\sc Some applications}
\label{appl}

As was mentioned in the Introduction, the
Hartshorne--Speiser--Lyubeznik Theorem has been applied to establish
the existence of a uniform Frobenius test exponent for Frobenius
closures of parameter ideals in a local ring $(R,\fm)$ that is
Cohen--Macaulay, or just generalized Cohen--Macaulay. The non-local
version of the Hartshorne--Speiser--Lyubeznik Theorem given in
\cite[Corollary 1.8]{mmj} can be reformulated as follows: if ($R$ is
not necessarily local and) $H$ is a left $R[x,f]$-module that is
Artinian as $R$-module, then there exists an $e \in \nn$ such that
$\ann_HR[x,f]x^e = \ann_HR[x,f]x^{e+1}$. With this in mind, one can
regard the following result as a `dual
Hartshorne--Speiser--Lyubeznik Theorem' for the case where $(R,\fm)$
is $F$-finite, local and complete.

\begin{thm}
\label{appl.1} Assume that $(R,\fm)$ is $F$-finite, local and
complete. Let $\M = (M, \beta)$ be a right $R[x,f]$-module that is
Noetherian as $R$-module. Then there exists $e \in \nn$ such that
$\M x^e = \M x^{e+1}$, that is, such that $M R[x,f]x^e = M
R[x,f]x^{e+1}$.
\end{thm}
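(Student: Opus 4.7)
The plan is to dualise the problem, apply the Hartshorne--Speiser--Lyubeznik Theorem on the ``left'' side, and then translate the resulting stabilisation back to $\M$ using Proposition \ref{ga.3}.

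First I would form $\D'(\M)$. Since $(R,\fm)$ is complete and $M$ is Noetherian, classical Matlis duality gives that $\D'(\M) = (M^{\vee}, D'(\beta))$ has an Artinian underlying $R$-module, so Theorem \ref{hs.4} applies to it. Hence there exists $e \in \nn$ such that every $g \in \D'(\M)$ killed by some power of $x$ is already killed by $x^e$; equivalently,
\[
\ann_{\D'(\M)} R[x,f]x^e \;=\; \ann_{\D'(\M)} R[x,f]x^{e+1},
\]
since both sides equal the $x$-torsion submodule $\Gamma_x(\D'(\M))$ once $e$ is large enough.

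Next I would feed this stabilisation into Proposition \ref{ga.3}(ii) with the graded two-sided ideals $\fB_e = R[x,f]x^e$ and $\fB_{e+1} = R[x,f]x^{e+1}$. That proposition yields $R[x,f]$-iso\-mor\-phisms
\[
\D'(\M/\M x^e) \;\cong\; \ann_{\D'(\M)} R[x,f]x^e, \qquad \D'(\M/\M x^{e+1}) \;\cong\; \ann_{\D'(\M)} R[x,f]x^{e+1},
\]
and, crucially, the isomorphisms are natural in $\fB$: the inclusion $\M x^{e+1} \subseteq \M x^e$ and the corresponding quotient map $\pi : \M/\M x^{e+1} \to \M/\M x^e$ fit into a commutative square that identifies $\D'(\pi)$ with the inclusion $\ann_{\D'(\M)} R[x,f]x^e \hookrightarrow \ann_{\D'(\M)} R[x,f]x^{e+1}$. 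By the previous paragraph this inclusion is the identity, so $\D'(\pi)$ is an isomorphism of left $R[x,f]$-modules.

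Finally, since $E = E_R(R/\fm)$ is an injective cogenerator for $R$ and both $\M/\M x^e$ and $\M/\M x^{e+1}$ are Noetherian (hence Matlis-reflexive), the functor $\D'$ reflects isomorphisms on this subcategory: the fact that $\D'(\pi)$ is an isomorphism forces $\pi$ itself to be an isomorphism, and therefore $\M x^e = \M x^{e+1}$, as required.

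The main obstacle I expect is purely bookkeeping: one has to verify that the isomorphism of Proposition \ref{ga.3}(ii) is functorial in $\fB$, so that stabilisation of the chain of annihilators on the dual side really does correspond to stabilisation of the chain $\M x^e \supseteq \M x^{e+1} \supseteq \cdots$, rather than merely giving abstract isomorphisms of the quotients. This is a routine diagram chase given the explicit commutative diagram used in the proof of Proposition \ref{ga.3}(ii), but it is the step where care is needed.
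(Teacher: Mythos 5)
Your plan is essentially the paper's proof: both dualise $\M$ to the left $R[x,f]$-module $\D'(\M)$ (Artinian as $R$-module by Matlis duality), apply the Hartshorne--Speiser--Lyubeznik Theorem to obtain $\ann_{\D'(\M)} R[x,f]x^e = \ann_{\D'(\M)} R[x,f]x^{e+1}$, and then translate this stabilisation back through Proposition \ref{ga.3}(ii) using the fact that $E$ is an injective cogenerator. The only cosmetic difference is that the paper works directly with the inclusion $k : \M x^{e+1} \hookrightarrow \M x^e$ and the two short exact sequences furnished by Proposition \ref{ga.3}(ii), comparing them in a ladder diagram whose left-hand square is an equality and whose middle column is the identity on $\D'(\M)$ (so $\D'(k)$ is forced to be an isomorphism), whereas you work with the quotient map $\pi : \M/\M x^{e+1} \to \M/\M x^e$ and the induced map on duals; the "naturality in $\fB$" you flag as the one step needing care is precisely what the paper's commutative diagram makes explicit.
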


\begin{proof} Let $E := E_R(R/\fm)$.
Select an $(R, R)$-bimodule isomorphism $\Psi : {}_{f}E
\stackrel{\cong}{\lra} \Hom_{rR}(R_f , E)$: recall that Lemma {\rm
\ref{main lemma}} ensures that there is such a $\Psi$. By
Proposition \ref{rt-lt} and Remark \ref{rl.6}, we know that $\D'(\M)
=(M^{\vee}, D(\beta))$ is a left $R[x,f]$-module; by Matlis duality,
as $R$-module, $M^{\vee} = \Hom_R(M,E)$ is Artinian. Therefore, by
the Hartshorne--Speiser--Lyubeznik Theorem, there exists an $e \in
\nn$ such that $\ann_{\D'(\M)} R[x,f]x^e = \ann_{\D'(\M)}
R[x,f]x^{e+1}$.

Let $i : \M x^e \stackrel{\subseteq}{\lra} \M$, $j : \M x^{e+1}
\stackrel{\subseteq}{\lra} \M$ and $k : \M x^{e+1}
\stackrel{\subseteq}{\lra} \M x^e$ be the inclusion
$R[x,f]$-homomorphisms, so that $i\circ k = j$. In view of
Proposition \ref{ga.3}(ii), there is a commutative diagram
$$
\begin{CD}
0 @>>>  \ann_{\D'(\M)} R[x,f]x^e @>>{\subseteq}> \D'(\M) @>>{\D'(i)}> \D'(\M x^e) @>>>  0 \\
@. @V{\subseteq}VV @| @V{\D'(k)}VV  @. \\
0 @>>>  \ann_{\D'(\M)} R[x,f]x^{e+1} @>>{\subseteq}> \D'(\M) @>>{\D'(j)}> \D'(\M x^{e+1}) @>>>  0 \\
\end{CD}
$$
with exact rows in the category $\!\phantom{i}_{R[x,f]}\Mod$.
Since $\ann_{\D'(\M)} R[x,f]x^e = \ann_{\D'(\M)} R[x,f]x^{e+1}$,
we see that $\D'(k)$ must be an isomorphism;
therefore, $k$ must be an isomorphism, because $E$ is an injective cogenerator for $R$.
Therefore $\M x^e = \M x^{e+1}$.
\end{proof}

It is natural to ask whether the conclusion of Theorem \ref{appl.1}
is still valid if we drop the assumptions about $R$ (except the one
that $R$ has characteristic $p$). In Theorem \ref{appl2} below, we
shall show that this is indeed the case. We first present two
preparatory lemmas, in which we assume only that $R$ is a
commutative Noetherian ring of characteristic $p$.

\begin{lem} \label{lem.1}
Let  $\M = (M, \beta)$  be a right $R[x,f]$-module. Suppose that
there is an element  $s \in R$  such that  $Ms \subseteq M x$. Then,
for all $k \in \N$, we have  $Ms^2 \subseteq M x^k$.
\end{lem}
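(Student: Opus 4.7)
The plan is to proceed by induction on $k$, using the fundamental skew-commutation rule of $R[x,f]$ translated into the right-module action. Specifically, the defining relation $xr = r^p x$ in $R[x,f]$ implies, for any $m \in M$ and $r \in R$, the identity $(mx)r = m(xr) = m(r^p x) = (mr^p)x$. In other words, $(Mx)r \subseteq Mr^p\, x$, and more generally $(Mx^k)r \subseteq Mr^{p^k}\, x^k$. This is the only computational tool I will need.

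The base case $k = 1$ is immediate: $Ms^2 \subseteq Ms \subseteq Mx$ by hypothesis. For the inductive step, suppose $Ms^2 \subseteq Mx^k$, and let us show $Ms^2 \subseteq Mx^{k+1}$. Apply the hypothesis $Ms \subseteq Mx$ to factor $Ms^2 = (Ms)s \subseteq (Mx)s$, and then use the skew-commutation identity to obtain $(Mx)s \subseteq Ms^p x$. The crucial observation is that, since $p \geq 2$, we have $s^p = s^2 \cdot s^{p-2}$, so that $Ms^p \subseteq Ms^2$; by the inductive hypothesis this gives $Ms^p \subseteq Mx^k$. Right-multiplying by $x$ yields $Ms^p \cdot x \subseteq Mx^k \cdot x = Mx^{k+1}$, and chaining the inclusions produces $Ms^2 \subseteq Mx^{k+1}$, completing the induction.

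There is no real obstacle here; the only subtlety worth flagging is that one must be careful to move $s$ past $x$ on the correct side (this is a right module, so the identity $(mx)s = (ms^p)x$ must be used, not its left-module counterpart), and one must notice that the factor $s^{p-2}$ is available precisely because $p$ is a prime, hence $p \geq 2$. Once those two points are in place, the inductive bootstrap is automatic.
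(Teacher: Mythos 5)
Your proof is correct and takes essentially the same route as the paper's: both argue by induction on $k$, using the chain $Ms^2 = (Ms)s \subseteq (Mx)s = Ms^p x$ together with the observation that $Ms^p \subseteq Ms^2 \subseteq Mx^k$ (valid because $p \ge 2$) to conclude $Ms^2 \subseteq Mx^{k+1}$. The paper merely presents these same steps in a slightly more condensed order.
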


\begin{proof}
We prove the lemma by induction on  $k$. When $k=1$, there is
nothing to prove. Assume that $Ms^2 \subseteq M x^k$  for a $k \in
\N$. Then, since  $Ms^p \subseteq Ms^2 \subseteq M x^k$, we have
$Ms^p x \subseteq Mx ^{k+1}$. Thus $Ms^2 = (Ms)s \subseteq (M x)s =
Ms^p x \subseteq Mx ^{k+1}$.  The lemma is therefore proved by
induction.
\end{proof}

\begin{lem} \label{lem.2}
Let  $\M = (M, \beta)$  be a right $R[x,f]$-module and  $S$ be a
multiplicatively closed subset of  $R$. Then the module of fractions
$S^{-1}M$ has a natural right $(S^{-1}R)[x,f]$-module structure in
which
$$\left(\frac{m}{s}\right)x = \frac{ms^{p-1}x}{s} \quad \text{for
all $m \in M$  and  $s \in S$.}$$ This structure is such that
$S^{-1}(M x^k ) = (S^{-1}M)x^k$ for all $k \in \N$.
\end{lem}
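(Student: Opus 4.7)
The plan is to appeal to the characterization recorded in Discussion~\ref{rtf}: to lift the $S^{-1}R$-module structure of $S^{-1}M$ to a right $(S^{-1}R)[x,f]$-module structure, it suffices to produce an abelian group endomorphism $\xi$ of $S^{-1}M$ satisfying $\xi(n a^p) = \xi(n) a$ for all $n \in S^{-1}M$ and $a \in S^{-1}R$; then $nx$ is defined to be $\xi(n)$. I propose the formula $\xi(m/s) := m s^{p-1} x / s$.

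To show $\xi$ is well defined, I would take an equality $m/s = m'/s'$ with witness $u(s'm - sm') = 0$ in $M$, multiply on the right by $s^{p-1}(s')^{p-1}x$, and use the relation $x\sigma = \sigma^p x$ (applied for $\sigma = s,s' \in R$ on the right of $M$) to push scalars past $x$. The key observation is that the resulting identity rearranges into
$$
u\bigl(s' \cdot m s^{p-1} x \;-\; s \cdot m' (s')^{p-1} x\bigr) \;=\; 0,
$$
which is precisely the fraction equality $m s^{p-1} x / s = m'(s')^{p-1} x / s'$ in $S^{-1}M$. Additivity of $\xi$ is a direct fraction computation. The Frobenius twist $\xi\bigl((m/s)(r/t)^p\bigr) = \xi(m/s)(r/t)$ is verified in the same spirit: one expands both sides, uses $x\sigma = \sigma^p x$ to shift the $x$ past every scalar, and checks that the two numerators coincide after cross-multiplication by the common denominator. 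This yields the required right $(S^{-1}R)[x,f]$-module structure on $S^{-1}M$.

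For the final assertion, I split $S^{-1}(Mx^k) = (S^{-1}M)x^k$ into two inclusions. Iterating the formula for $\xi$ gives $(m/s)x^k = m s^{p^k - 1} x^k / s = (m s^{p^k - 1} x^k)/s$, which lies in $S^{-1}(Mx^k)$, so $(S^{-1}M)x^k \subseteq S^{-1}(Mx^k)$. For the reverse inclusion, inside $(S^{-1}R)[x,f]$ the iterated commutation rule gives $x^k(1/s) = (1/s^{p^k})x^k$, so in the right module $S^{-1}M$ we have
$$
\frac{n x^k}{s} \;=\; \Bigl(\frac{n}{1}\Bigr) x^k \cdot \frac{1}{s} \;=\; \Bigl(\frac{n}{1}\Bigr)\frac{1}{s^{p^k}}\, x^k \;=\; \frac{n}{s^{p^k}}\, x^k \;\in\; (S^{-1}M)\, x^k
$$
for all $n \in M$ and $s \in S$, giving $S^{-1}(Mx^k) \subseteq (S^{-1}M)x^k$.

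The main obstacle is the well-definedness of $\xi$: the naive identity $(m/s)x = (mx)/s$ fails here — in fact $(mx)/s = (m/s^p) x$ in the localized module — so the Frobenius twist factor $s^{p-1}$ in the denominator of the formula is essential to make the proposed action compatible with the equivalence relation defining $S^{-1}M$. Once this subtlety is handled, the remaining steps are routine manipulations with the commutation rule $x\sigma = \sigma^p x$.
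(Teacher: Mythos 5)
Your approach is essentially the same as the paper's, which simply declares the construction of the $(S^{-1}R)[x,f]$-structure ``straightforward'' and then verifies the two inclusions; you helpfully try to fill in the details the paper omits. Your treatment of $S^{-1}(Mx^k) \subseteq (S^{-1}M)x^k$ via the commutation rule $x^k(1/s) = (1/s^{p^k})x^k$ in $(S^{-1}R)[x,f]$ is a slightly cleaner way to reach the same endpoint as the paper, which instead multiplies numerator and denominator of $(mx^k)/s$ by $s^{p^k-1}$, commutes the scalar past $x^k$, and rewrites $(s^{p^k-1})^{p^k} = (s^{p^k})^{p^k-1}$ to recognize $(m/s^{p^k})x^k$; both arguments terminate at the same fraction.

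There is, however, a small but genuine slip in your well-definedness check, and it is worth fixing. Starting from the witness $u(s'm - sm') = 0$, right-multiplication by $s^{p-1}(s')^{p-1}x$ yields
$$
m\bigl(u\,s^{p-1}(s')^{p}\bigr)x \;=\; m'\bigl(u\,s^{p}(s')^{p-1}\bigr)x,
$$
with $u$ appearing to the \emph{first} power, since none of those scalars ever cross $x$. But the target fraction equality $m s^{p-1}x/s = m'(s')^{p-1}x/s'$, expanded with witness $u$, requires
$$
(ms^{p-1}x)(us') \;=\; (m'(s')^{p-1}x)(us),\qquad\text{i.e.}\qquad m\bigl(u^{p}s^{p-1}(s')^{p}\bigr)x \;=\; m'\bigl(u^{p}s^{p}(s')^{p-1}\bigr)x,
$$
with $u^{p}$, because here $u$ must be pushed past $x$. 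So the two identities do not ``rearrange'' into one another as you assert. The fix is immediate: from $u(s'm - sm') = 0$ deduce $u^{p}(s'm - sm') = 0$, and multiply \emph{that} by $s^{p-1}(s')^{p-1}x$. (Equivalently, exhibit $v := u^{p} \in S$ as the witness for the target fraction equality.) The same care is needed in verifying the Frobenius-twist condition $\xi\bigl((m/s)(r/t)^{p}\bigr) = \xi(m/s)(r/t)$; after you locate a common denominator, the witness from one side acquires a $p$th power when pushed past $x$, and you should again be prepared to raise it to the $p$th power.
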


\begin{proof} It is straightforward to construct a
right $(S^{-1}R)[x,f]$-module structure on $S^{-1}M$ with the
specified properties. An easy inductive argument shows that
$$\left(\frac{m}{s}\right)x^k = \frac{ms^{p^k-1}x^k}{s} \quad \text{for
all $k \in \N$, $m \in M$  and  $s \in S$.}$$ It is clear from this
that $S^{-1}(M x^k ) \supseteq (S^{-1}M)x^k$ for a $k \in \N$.

To establish the reverse inclusion, let $\alpha \in S^{-1}(M x^k )$,
so that $\alpha = (mx^k)/s$ for some $m \in M$ and $s \in S$. Then
$$
\alpha = \frac{mx^k}{s} = \frac{mx^ks^{p^k-1}}{s^{p^k}} =
\frac{m(s^{p^k-1})^{p^k}x^k}{s^{p^k}} =
\frac{m(s^{p^k})^{p^k-1}x^k}{s^{p^k}} =
\left(\frac{m}{s^{p^k}}\right)x^k \in (S^{-1}M)x^k.
$$
\end{proof}

\begin{thm}\label{appl2}
Assume only that $R$ is a commutative Noetherian ring of
characteristic $p$. Let $\M = (M,\alpha)$ be a right $R[x,f]$-module
that is Noetherian as $R$-module. Then there exists  $k \in \nn$
such that $\M x^k = \M x^{k+1}$.
\end{thm}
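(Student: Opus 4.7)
The plan is to reduce to the case $\bigcap_k Mx^k = 0$ and then perform Noetherian induction on the closed set $\supp_R(M) \subseteq \Spec R$, combining the Artinianness of the localization $M_\fp$ at a minimal prime $\fp \in \supp_R(M)$ with the squaring trick of Lemma~\ref{lem.1}.

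For the reduction, I would set $N := \bigcap_k Mx^k$. Each $Mx^k$ is an $R[x,f]$-submodule of $M$, hence so is $N$; and since $N \subseteq Mx^k$ for every $k$, the chain $(Mx^k)_k$ stabilizes in $M$ if and only if the chain $\bigl((M/N)x^k\bigr)_k$ stabilizes in $M/N$. Replacing $M$ by $M/N$, I may assume $\bigcap_k Mx^k = 0$, in which case the desired conclusion is equivalent to $Mx^k = 0$ for some $k \in \nn$.

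I would then do Noetherian induction on the closed subset $\supp_R(M)$. The base case $M = 0$ is trivial. For the inductive step, pick $\fp$ minimal in $\supp_R(M)$. By Lemma~\ref{lem.2}, $M_\fp$ carries a natural right $R_\fp[x,f]$-structure with $(Mx^k)_\fp = (M_\fp)x^k$; the minimality of $\fp$ forces $M_\fp$ to have zero-dimensional support, so $M_\fp$ is of finite length over $R_\fp$, in particular Artinian. By DCC, the chain $(M_\fp)x^k$ stabilizes, so $\bigl(Mx^{k_\fp}/Mx^{k_\fp+1}\bigr)_\fp = 0$ for some $k_\fp \in \nn$; since this quotient is Noetherian over $R$, there exists $t \in R \setminus \fp$ with $t \cdot Mx^{k_\fp} \subseteq Mx^{k_\fp+1} = (Mx^{k_\fp})x$. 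Applying Lemma~\ref{lem.1} to the $R[x,f]$-submodule $Mx^{k_\fp}$ of $M$ gives $t^2 \cdot Mx^{k_\fp} \subseteq Mx^{k_\fp+l}$ for all $l \in \N$, so from the assumption $\bigcap_l Mx^l = 0$ I conclude $t^2 \in \ann_R(Mx^{k_\fp}) \setminus \fp$, i.e., $\fp \notin \supp_R(Mx^{k_\fp})$ and $\supp_R(Mx^{k_\fp}) \subsetneq \supp_R(M)$.

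The inductive hypothesis applied to $Mx^{k_\fp}$ (a right $R[x,f]$-module, Noetherian over $R$, with strictly smaller support, automatically satisfying $\bigcap_l (Mx^{k_\fp})x^l = \bigcap_{l\geq k_\fp} Mx^l = 0$) now produces $l$ with $(Mx^{k_\fp})x^l = 0$, that is $Mx^{k_\fp+l} = 0$, closing the induction. The main obstacle is the passage from the \emph{local} Artinian stabilization, which only supplies a stabilization index $k_\fp$ after localizing at the single prime $\fp$, to a \emph{global} strict drop in support; this is achieved precisely by feeding the local containment ``$t \cdot Mx^{k_\fp} \subseteq (Mx^{k_\fp})x$'' into Lemma~\ref{lem.1} to obtain the global annihilator relation ``$t^2 \cdot Mx^{k_\fp} = 0$''.
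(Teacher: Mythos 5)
Your proof is correct, and it uses the same two preparatory lemmas (Lemmas \ref{lem.1} and \ref{lem.2}) and the same basic local-to-global mechanism as the paper's argument; but the overall structure is genuinely different, and I think a little cleaner. The paper performs \emph{two} reductions --- it kills the $x$-torsion part $(0:_M Rx^\infty)$ as well as the $x$-divisible part $\bigcap_k Mx^k$ --- and needs to iterate the pair $M \mapsto M^{\sigma} \mapsto (M^{\sigma})^{\gamma} \mapsto \cdots$, invoking Noetherian-ness of $M$ to show that the kernels stabilize, after which it argues by contradiction that the fully reduced module must be zero (localizing at $S = R \setminus \bigcup_{\fp \in \Min_R(M)} \fp$, extracting a single $s \in S$ via coherence, and using Lemma \ref{lem.1} to force $Ms^{2p^k}=0$, contradicting $s \notin \fp$). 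Your argument, by contrast, needs only the one reduction to $\bigcap_k Mx^k = 0$, after which the conclusion becomes $Mx^k=0$ for some $k$, and you replace the iteration-plus-contradiction scheme by Noetherian induction on the closed set $\supp_R(M)$: localize at a \emph{single} minimal prime $\fp$, obtain $t \in R\setminus\fp$ with $t\cdot Mx^{k_\fp}\subseteq (Mx^{k_\fp})x$ from Artinian DCC plus finite generation, feed this into Lemma \ref{lem.1} to get $t^2 \cdot Mx^{k_\fp} = 0$, conclude $\supp_R(Mx^{k_\fp}) \subsetneq \supp_R(M)$, and close with the inductive hypothesis applied to $Mx^{k_\fp}$ (which inherits the condition $\bigcap_l (Mx^{k_\fp})x^l = 0$ automatically). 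What the Noetherian induction buys you is that you never need the $x$-torsion reduction $(0:_M Rx^\infty)=0$ at all --- in the paper that reduction is only used to pass from $Ms^{2p^k}x^k = 0$ to $Ms^{2p^k}=0$, a step your argument simply does not require, since you need only drop the support of $Mx^{k_\fp}$ rather than annihilate $M$ itself.
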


\begin{proof} It is straightforward to check that $(0:_MRx^k) := \{m\in M \ | \ mRx^k = 0\}$ is
an $R[x,f]$-submodule of $M$ for each $k\in \N$. Define $(0 :_{M}
Rx^{\infty}) := \bigcup _{k\in\N} (0:_{M} Rx^k)$; this is also an
$R[x,f]$-submodule of $M$; therefore $M^{\gamma} := M/(0:_M
Rx^{\infty})$ is again a right $R[x,f]$-module.

Since $M$ is Noetherian as $R$-module, the ascending chain
$$
(0:_MRx) \subseteq (0:_MRx^2)\subseteq \cdots \subseteq
(0:_MRx^k)\subseteq \cdots
$$
must eventually be stationary, say at $(0:_MRx^{\ell})$; then $(0
:_{M} Rx^{\infty}) = (0:_MRx^{\ell})$. We point out that if there
exists $k \in \nn$ such that $M ^{\gamma}x^k = M^{\gamma} x^{k+1}$,
then $Mx^k \subseteq Mx^{k+1} + (0:_MRx^{\ell})$, so that
multiplication on the right by $x^{\ell}$ yields that $Mx^{k+\ell}
\subseteq Mx^{k+\ell+1}$. Thus, if the conclusion of the theorem is
true for $M^{\gamma}$, then it is true for $M$.

Next, we define $M \cdot x^{\infty} := \bigcap_{k\in\N}Mx^k$; this
is an $R[x,f]$-submodule of $M$, and so $M^{\sigma} := M/M \cdot
x^{\infty}$ is again a right $R[x,f]$-module. Note also that, if
there exists $k \in \nn$ such that $M ^{\sigma}x^k = M^{\sigma}
x^{k+1}$, then $Mx^k \subseteq Mx^{k+1} + M \cdot x^{\infty} =
Mx^{k+1}$. Thus, if the conclusion of the theorem is true for
$M^{\sigma}$, then it is true for $M$.

Consider the sequence of right $R[x, f]$-modules
$$
M \to M^{\sigma} \to (M^{\sigma})^{\gamma} \to
((M^{\sigma})^{\gamma})^{\sigma} \to
(((M^{\sigma})^{\gamma})^{\sigma})^{\gamma} \to \cdots,
$$
where, at each stage, the arrow denotes the appropriate natural
$R[x,f]$-epimorphism. The first three paragraphs of this proof show
that, if the claim in the theorem holds for any module $M'$ in this
sequence, then it holds for all modules to the left of $M'$,
including $M$ itself. If, for each $n\in\N$, we let $K_n$ denote the
kernel of the composition of the first $n$ epimorphisms in this
sequence, then $K_1 \subseteq K_2 \subseteq \cdots \subseteq K_n
\subseteq \cdots$ is an ascending chain of $R$-submodules of $M$,
and therefore eventually stationary. This means that, in the above
displayed sequence, there is a term to the right of which all the
epimorphisms are isomorphisms. Therefore it is enough for us to
prove the theorem under the additional assumptions that
\begin{equation}\label{assumption}
 (0:_M Rx^{\infty}) =0 \quad \text{and} \quad M \cdot x^{\infty}=0.
\end{equation}

We shall show now that these additional assumptions force $M$ to be
zero (in which case $Mx = Mx^2$). Suppose that $M \not= 0$, and seek
a contradiction. Denote by $\mathrm{Min}_R(M)$  the set of minimal
prime ideals in $\mathrm{Supp}_R(M)$. This is a non-empty set,
because $M \not= 0$. We set  $S := R \setminus \bigcup _{\fp \in
\mathrm{Min}_R(M)} \fp$, a multiplicatively closed subset of $R$.
Then, the $S^{-1}R$-module  $S^{-1}M$  is a non-trivial Artinian
module, since $\dim _{S^{-1}R} S^{-1}M =0$. By Lemma \ref{lem.2},
$S^{-1}M$ has a natural structure as a right
$(S^{-1}R)[x,f]$-module. Consider the descending sequence of right
$(S^{-1}R)[x, f]$-submodules
$$
S^{-1}M \supseteq (S^{-1}M) x \supseteq (S^{-1}M) x^2 \supseteq
(S^{-1}M) x ^3 \supseteq (S^{-1}M) x ^4 \supseteq \cdots
$$
of $S^{-1}M$. Since $S^{-1}M$ is Artinian as $S^{-1}R$-module, we
must have $(S^{-1}M) x^k = (S^{-1}M) x^{k+1}$  for some $k\in\N$.
Then, $S^{-1}(M  x^k) = S^{-1}(M x^{k+1})$ by virtue of Lemma
\ref{lem.2}. Since $M x^k$  is finitely generated as $R$-module, it
follows that there is an element  $s \in S$  such that $(M x^k)s
\subseteq M x^{k+1}$. Now, applying  Lemma \ref{lem.1} to the right
$R[x,f]$-module  $M x^k$,
 we have that  $(M x^k)s^2 \subseteq M x^{k+k'}$  for all $k' \in\N$.
Therefore  $(M x^k)s^2 \subseteq M\cdot x^{\infty} = 0$  by the
assumption  (\ref{assumption}). Since  $(M  x^k) s^2 = M s^{2p^k}
 x^k$, we have shown that  $M s^{2p^k} x^k = 0$; hence
$Ms^{2p^k} \subseteq (0:_M Rx^k) \subseteq (0:_M Rx^{\infty}) =0$.
Consequently we have  $Ms^{2p^k} =0$. Therefore  $s$  belongs to
$\sqrt{\mathrm{ann}_R(M)}$; therefore $s \in \fp$  for all $\fp \in
\mathrm{Min}_R(M)$. But this contradicts the fact that $s$  is an
element of $S$.
\end{proof}


One of the main results of \cite{ga} is that, if $\H$ is an
$x$-torsion-free left $R[x,f]$-module that is Artinian as
$R$-module, then there are only finitely many graded annihilators of
$R[x,f]$-submodules of $\H$. See \cite[Corollary 3.11]{ga}. This
result has relevance to the existence of tight closure test elements
in certain circumstances: see \cite[Corollary 4.7]{ga} and
\cite[Theorem 3.5]{gatcti}. We can use our work in \S1 and \S2 to
obtain a dual result in the special case where $R$ is $F$-finite,
local and complete.

\begin{thm}
\label{appl.3} Assume that $(R,\fm)$ is $F$-finite, local and
complete. Let $\M = (M, \beta)$ be an $x$-divisible right
$R[x,f]$-module that is Noetherian as $R$-module. Then there are
only finitely many graded  annihilators of $R[x,f]$-homomorphic
images of $\M$.
\end{thm}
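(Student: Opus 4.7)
The plan is to transfer the problem across the equivalence of categories of Theorem \ref{main thm} (with $I = E := E_R(R/\fm)$) and then invoke Theorem \ref{in.2}. Specifically, the functor $\D'$ should convert the $x$-divisible Noetherian right $R[x,f]$-module $\M$ into an $x$-torsion-free Artinian left $R[x,f]$-module $\D'(\M)$, and convert the $R[x,f]$-homomorphic images of $\M$ into $R[x,f]$-submodules of $\D'(\M)$, with matching graded annihilators in each case.

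To set this up, I would first invoke Lemma \ref{main lemma} to fix an $(R,R)$-bimodule isomorphism $\Psi : {}_fE \stackrel{\cong}{\to} \Hom_{rR}(R_f,E)$ and form $\D'(\M) = (M^{\vee}, D'(\beta))$. Matlis duality gives that $M^{\vee}$ is Artinian as $R$-module, and Corollary \ref{ga.4} yields that $\D'(\M)$ is $x$-torsion-free because $\M$ is $x$-divisible. Theorem \ref{in.2}, applied to the left $R[x,f]$-module $\D'(\M)$, then provides a finite set $\mathcal{F}$ of graded two-sided ideals of $R[x,f]$ such that $\grann_{R[x,f]} K \in \mathcal{F}$ for every $R[x,f]$-submodule $K$ of $\D'(\M)$.

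It remains to link homomorphic images of $\M$ to $R[x,f]$-submodules of $\D'(\M)$. For any $R[x,f]$-epimorphism $\pi : \M \twoheadrightarrow \overline{\M}$, the module $\overline{\M}$ is Noetherian (hence Matlis-reflexive) as $R$-module; and, since $E$ is injective, the functor $\Hom_R(-,E)$ is exact, an exactness inherited by $\D'$, so that $\D'(\pi) : \D'(\overline{\M}) \to \D'(\M)$ is a left $R[x,f]$-monomorphism whose image is an $R[x,f]$-submodule of $\D'(\M)$. Proposition \ref{ga.2}(iv) then gives
$$
\grann \overline{\M}_{R[x,f]} = \grann_{R[x,f]} \D'(\overline{\M}),
$$
so the set of graded annihilators of homomorphic images of $\M$ is contained in $\mathcal{F}$, and hence finite. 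The substantive work has been done in the earlier results, and the only point requiring care is the verification that $\D'$ is truly exact as a functor between the categories of $R[x,f]$-modules (not merely of $R$-modules); this is transparent from its construction via $\Hom_R(-,E)$, and is, as expected, the only nontrivial bookkeeping in the argument.
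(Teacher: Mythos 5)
Your proof is correct, and it follows the same overall strategy as the paper: dualize $\M$ to the $x$-torsion-free Artinian left $R[x,f]$-module $\D'(\M)$, invoke \cite[Corollary 3.11]{ga} (Theorem \ref{in.2}) to get finitely many graded annihilators of $R[x,f]$-submodules of $\D'(\M)$, and then match graded annihilators across the duality. Where you genuinely diverge from the paper is in the step relating quotients of $\M$ to submodules of $\D'(\M)$. The paper first argues that any graded annihilator $\fB$ of a homomorphic image $\M/\mathbf{L}$ is already realized as $\grann(\M/\M\fB)_{R[x,f]}$, and then calls on Proposition \ref{ga.3}(ii) (a nontrivial diagram chase) to exhibit an isomorphism $\D'(\M/\M\fB) \cong \ann_{\D'(\M)}\fB$ onto a specific submodule of $\D'(\M)$. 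You instead apply $\D'$ directly to an arbitrary $R[x,f]$-epimorphism $\pi : \M \twoheadrightarrow \overline{\M}$, use exactness of $\Hom_R(-,E)$ together with the functoriality in Proposition \ref{rt-lt} to see that $\D'(\pi)$ embeds $\D'(\overline{\M})$ as an $R[x,f]$-submodule of $\D'(\M)$, and then invoke Proposition \ref{ga.2}(iv). Your route is shorter and sidesteps Proposition \ref{ga.3}(ii) entirely; the paper's route costs a little more but gives the sharper information that the dual of $\M/\M\fB$ is precisely $\ann_{\D'(\M)}\fB$, a fact of independent interest that the paper records in Proposition \ref{ga.3}(ii). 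Both arguments are valid.
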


\begin{proof}
Select an $(R, R)$-bimodule isomorphism $\Psi :{}_fE_R(R/\fm)
\stackrel{\cong}{\lra} \Hom_R(R_f,E_R(R/\fm))$: recall that Lemma
{\rm \ref{main lemma}} ensures that there is such a $\Psi$. Use the
notation of {\rm \ref{main thm}}, but take $I$ to be $E :=
E_R(R/\fm)$. By Proposition  \ref{rt-lt}, we know that $\D'(\M)$ is
a left $R[x,f]$-module; as such, it is $x$-torsion-free, by
Corollary \ref{ga.4}. By Matlis duality, as $R$-module, $M^{\vee} =
\Hom_R(M,E)$ is Artinian. By \cite[Lemma 1.9, Definition 1.10 and
Corollary 3.11]{ga}, there are only finitely many graded
annihilators of $R[x,f]$-submodules of $\D'(\M)$. It is therefore
enough for us to show that, if $\fB$ is the graded-annihilator of
some $R[x,f]$-homomorphic image of $\M$, then $\fB$ is the
graded-annihilator of some $R[x,f]$-submodule of $\D'(\M)$. This we
do.

Thus there is an $R[x,f]$-submodule $\mathbf{L}$ of $\M$ such that
$\fB = \grann(\M /\mathbf{L})_{R[x,f]}$. Therefore $\M \fB \subseteq
\mathbf{L}$, so that there is an $R[x,f]$-epimorphism $\M /\M \fB
\lra \M /\mathbf{L}$. Therefore
$$
\fB \subseteq \grann(\M /\M \fB)_{R[x,f]} \subseteq \grann(\M
/\mathbf{L})_{R[x,f]} = \fB,
$$
so that $\fB = \grann(\M /\M \fB)_{R[x,f]}$.
It now follows from Proposition \ref{ga.2}(iv) that
$$\fB = \grann_{R[x,f]}\D'(\M /\M \fB).$$
However, Proposition \ref{ga.3}(ii) shows that there is an isomorphism
$\D'(\M / \M \fB) \cong \ann_{\D'(\M)} \fB$ of left $R[x,f]$-modules;
therefore $\fB$ is the graded annihilator of the $R[x,f]$-submodule
$\ann_{\D'(\M)}\fB$ of $\D'(\M)$.
This completes the proof.
\end{proof}

It is natural to ask whether the conclusion of Theorem \ref{appl.3}
is still valid if we drop the assumptions about $R$ (except the one
that $R$ has characteristic $p$).

\begin{qu}
\label{appl.4} Assume only that $R$ is (a commutative Noetherian
ring) of characteristic $p$. Let $\M$ be an $x$-divisible right
$R[x,f]$-module that is Noetherian as $R$-module. Is the set of
graded annihilators of $R[x,f]$-homomorphic images of $\M$ finite?
\end{qu}

At the time of writing, we are not able to answer Question
\ref{appl.4}.


\end{document}